\pgfplotsset{compat=1.18}
  \newcounter{corr}
  \definecolor{violet}{rgb}{0.580,0.,0.827}
  \newcommand{\corr}[3]{\typeout{Warning : a correction remains in page \thepage}
    \stepcounter{corr}        
  	      {\color{blue}\ifmmode\text{\,\sout{\ensuremath{#1}}\,}\else\sout{#1}\fi}
                {\color{red}#2}
                {\color{violet} #3}
  }
  \newcounter{upd}
  \definecolor{green}{rgb}{0.0, 0.5, 0.0}
  \definecolor{brown}{rgb}{0.8, 0.33, 0.0}
  \newcommand{\upd}[3]{\typeout{Warning : an update remains in page \thepage}
  	\stepcounter{upd}        
  	{\color{blue}\ifmmode\text{\,\sout{\ensuremath{#1}}\,}\else\sout{#1}\fi}
  	{\color{green}#2}
  	{\color{brown} #3}}
\def\R{\mathbb{R}}
\def\N{\mathbb{N}}
\def\eui{e_u^{n, i}}
\def\eum{e_u^{n, i-1}}
\def\evi{e_v^{n, i}}
\def\evm{e_v^{n, i-1}}
\def\rzeta{\zeta_\varepsilon}
\newtheorem{thm}{Theorem}
\numberwithin{thm}{section}
\newtheorem{lemma}[thm]{Lemma}
\newtheorem{remark}[thm]{Remark}
\newtheorem{definition}[thm]{Definition}
\newtheorem{algo}[thm]{Algorithm}
\newcommand{\lip}{\left\langle}
\newcommand{\rip}{\right\rangle}
\newcommand{\email}[1]{\href{mailto:#1}{#1}}
\newcommand{\mb}{\mathbb}
\newcommand{\mc}{\mathcal}
\newcommand{\xdo}[1][]{X_{\mc{D}#1,0}}
\newcommand{\pd}{\Pi_{\mc{D}}}
\newcommand{\dd}{\nabla_{\mc{D}}}
\newcommand{\bs}{\boldsymbol}
\newcommand{\lkl}{\mathcal{L}(\mc{K}(\Theta),L^{2}(\Theta))}
\newcommand{\mv}[1][u]{\mathbf{#1}}
\newcommand{\bea}[1][]{\begin{equation}\label{#1}\begin{aligned}}
\newcommand{\eea}{\end{aligned}\end{equation}}
\newcommand{\beas}{\begin{equation*}\begin{aligned}}
\newcommand{\eeas}{\end{aligned}\end{equation*}}
\newcommand{\NN}{\mathbb N}
\newcommand{\RR}{\mathbb R}
\begin{document}

\title{Efficient iterative linearised solvers for numerical approximations of stochastic Stefan problems}
\date{August 2025}
\author[1,4]{Muhammad Awais Khan\footnote{\email{akhan@gudgk.edu.pk}}}
\author[1,2]{J\'er\^ome Droniou\footnote{\email{jerome.droniou@umontpellier.fr}}}
\author[1]{Kim-Ngan Le\footnote{\email{ngan.le@monash.edu}}}
\author[3]{Sorin Pop\footnote{ \email{sorin.pop@uhasselt.be}}}
\affil[1]{School of Mathematics, Monash University, Australia}
\affil[2]{IMAG, Univ. Montpellier, CNRS, Montpellier, France}
\affil[3]{Hasselt University, Campus Diepenbeek, Agoralaan Gebouw D, 3590 Diepenbeek, Belgium}
\affil[4]{Ghazi University, Dera Ghazi Khan, Pakistan}

\maketitle

\begin{abstract}
We present iterative solvers to approximate the solution of numerical schemes for stochastic Stefan problems. After briefly talking about the convergence results, we tackle the question of efficient strategies for solving the nonlinear equation associated with this scheme. We explore several approaches, from a standard Newton technique to linearised solvers. The latter offer the advantage of using the same coefficient matrix of the linearised system in each nonlinear iteration, for all time steps, and across all realisations of the Brownian motions. As a consequence, the system can be factorised once and for all. Although the linearised approach has a slower convergence rate, our sensitivity analysis and the use of adaptive tolerance in both deterministic and stochastic cases provide valuable insights for choosing the most effective solver across various scenarii.
\end{abstract}
\section{Introduction}
Stochastic partial differential equations (SPDEs) is an important research field in science and engineering and has numerous applications. Some of the influential applications are: super-processes in biological systems \cite{parisi1981perturbation}, the KPZ equation on dynamical scaling of a growing interface \cite{kardar1986dynamic} and the stochastic quantisation in Euclidean quantum field theory \cite{jona1985stochastic}. 

A great effort has been made to investigate the subject theoretically as well as numerically in the last fifty years. The theoretical foundations concerning the existence, uniqueness, and regularity of solutions to stochastic partial differential equations (SPDEs) have been extensively developed; see, for instance, \cite{Walsh1986introduction}. Parallel to these developments, considerable effort has been devoted to the numerical analysis of stochastic parabolic PDEs. Various numerical approaches have been proposed, including Galerkin methods \cite{grecksch1996time,hausenblas2003semilinear,gyongy2009numerical}, implicit time-stepping schemes \cite{gyongy1997implicit,kamrani2018numerical} and finite element methods \cite{allen1998finite,walsh2005finite}. Recent advancements include the numerical treatment of SPDEs such as the stochastic Allen–Cahn equation with multiplicative noise \cite{majee2018numerical}, stochastic p-Laplace systems \cite{breit2021numerical,diening2023numerical}, and stochastic total variation flow \cite{bavnas2021numerical}.% Furthermore, finite-volume methods have been applied to heat equations with multiplicative Lipschitz noise \cite{bauzet2023numerical}.
The gradient discretisation method has also been employed for stochastic evolution equations governed by Leray–Lions operators \cite{droniou2020design} and for stochastic non-Newtonian Stokes flows with transport noise \cite{droniou2024stokes}.

The deterministic Stefan problem characterises the phase transition between two thermodynamical states, such as from solid to liquid, within a non-mobile medium. A fundamental difference between deterministic and stochastic Stefan problems (SSP) lies in the nature of their solutions. While the deterministic formulation yields a unique trajectory for the temperature field and the moving interface, the stochastic counterpart characterises the solution as a probability distribution, capturing the range of possible system behaviours under uncertainty. This probabilistic framework is crucial for modelling and analysing phenomena where randomness plays a significant role -- such as alloy solidification in fluctuating environments or heat transfer in complex, heterogeneous media.

Solving stochastic partial differential equations presents significant computational challenges due to the need for simulating solutions across a finite sample set of Brownian motions. The number of Brownian motions required to approximate the solution is closely linked to the intensity of the noise in the system. To obtain an enhanced resolution of the solution, a sufficiently large sample size is often necessary. However, increasing the number of Brownian motions leads to a considerable rise in computational cost. 

A comprehensive numerical analysis of stochastic Stefan problem has been conducted recently, in \cite{DRONIOU2024114}, where the mass-lumped $\mathbb{P}^1$ (MLP1) finite element and Hybrid mimetic mixed (HMM) methods are used for the numerical simulations. Since the corresponding equations are nonlinear, the Newton method was used to compute approximate solutions to these schemes. It is observed during the simulations that the cost of quadratic convergence is quite expensive in terms of CPU time. This nonlinear solver moreover requires to re-assemble the Jacobian at each time step and for each realisation of the Brownian motion, which leads to a size-able overhead in computational cost. As an alternative, one can consider other iterative schemes that have been designed for numerical approximations of deterministic Stefan problems, like the so-called L-Scheme \cite{L_Scheme, List}, or a modified variant of it \cite{MITRA_Mscheme20191722}. In the present work, we explore the  efficiency of such solvers for stochastic models.

Consider the stochastic Stefan problem:
\begin{equation}\label{eq:sp}
\begin{cases}
du-\text{div}[\nabla\zeta(u)]dt=f(\zeta(u)) dW_{t}\quad \text{in}\quad \Theta_{T}:=(0,T)\times \Theta,\\
u(0,\cdot)=u_{0} \quad \text{in}\quad \Theta,\\
\zeta(u)=0\quad \text{on}\quad (0,T)\times \partial\Theta,
\end{cases}
\end{equation}
where $T>0$, $\Theta$ is an open-bounded domain in $\mathbf{R}^{d}\left( d\ge 1\right)$ and $W$ is a $\mathcal{Q}$-Wiener process. The nonlinear term $\zeta$ is a globally non-decreasing Lipschitz continuous function passing through the origin and coercive.

For $\omega$ in a 1-probability set $\Omega$, a solution to the above problem is a stochastic process $u(t,x,\omega)$ which is numerically approximated using a Monte Carlo procedures. That is, having $R$ samples $(\omega_i)_{i=1,\ldots,R}$, a numerical method will typically compute
$$\mathbb{E}_{R}[u_h(x_j,t_n)]=\frac{1}{R}\sum^{R}_{i=1} u_h(x_j,t_n,\omega_i),$$ 
where $\{x_j\}_{j=1}^M$ are the discretised spatial points and $\{t_n\}_{n=1}^{N}$ is the time discretisation.

Observe that, at each time step and for each realisation $W_t(\omega)$, it is necessary to solve a nonlinear system. When doing so, a linearisation technique such as the Newton method, or the L-Scheme %\cite{L_Scheme, List} 
can be employed, which ultimately reduces the fully discrete, nonlinear problems to sequences of linear systems $A_h^{n, i} U_h^{n, i} = b_h^{n, i}$, with $n$ and $h$ indicating the time step and the spatial mesh, and $i$ the iteration index. For any of the schemes below, the unknown vector is $U_h^{n, i}$. The matrix $A_h^{n,i}$ and the vector $b_h^{n,i}$ are computed depending on the chosen scheme. For any of the schemes considered below, the vector $b_h^{n,i}$ is computed at every iteration and for each realisation of the Brownian motion. In the case of the Newton method, the same holds for the matrix $A_h^{n, i}$, implying that this matrix is assembled $R\times N\times I$ times, where $I$ is the number of iterations required to approximate the nonlinear system and $N$ the number of time steps. The other  methods discussed here use the idea of the L-Scheme, where the linearised operator does not change during iterations. Consequently, the matrix $A_h^{n, i}$ does not change with the iteration and, in fact, if the time step and the mesh are fixed, one has the same matrix $A_h^{n, i} = A$ across all iterations, time steps, and realisations. Therefore, it is sufficient to assemble this matrix once, even before starting any other calculation, which significantly reduces the computational cost. This reduction is achieved at the expense of a linear convergence, as opposed to the quadratic convergence offered by Newton's method. However, the convergence is unconditional, in the sense that it does not depend on the initial guess, thus on the time step, and on the chosen spatial discretisation, or the mesh. Here we analyse the trade-off between the computational time saved by reducing the frequency of matrix computation and the slower linear convergence rate.

This manuscript is organised as follows. Section 2 introduces the assumptions and notations, followed by the formulation of the proposed gradient scheme. A regularised version of the gradient scheme for the stochastic Stefan problem is presented in Section 2.1. Section 3 focuses on the development of linearised solvers for both regularised and non-regularised gradient schemes. Specifically, the Newton method for the gradient scheme is described in Section 3.1, while the linearised solvers without and with regularisation are detailed in Sections 3.2 and 3.3, respectively, along with their convergence analyses. A comprehensive numerical study, including sensitivity analysis, is provided in Section 4 for both deterministic and stochastic cases of the Stefan problem.

\section{Problem formulation and Gradient scheme}

The Gradient discretisation method (GDM) will be used to approximate the solution to the above problem. The GDM, a generic framework, covers a large class of conforming and nonconforming schemes such as finite volume methods, Galerkin methods (including mass-lumped finite element), mixed finite element methods, hybrid high-order and virtual element methods, etc. The convergence results established for the GDM covers all the method within the framework. For further detail, refer to the monograph \cite{droniou2018gradient}. 

We recall here the main components of the gradient discretisation method.	
    \begin{definition}\label{def:GDF}
    Let $\mc{D}=(\xdo,\pd,\dd,\mc{I}_{\mc{D}},(t^{(n)})_{n=0,\cdots,N})$ be a space-time gradient discretisation (GD) for the homogeneous Dirichlet boundary conditions, with piecewise constant reconstruction, where:
		\begin{enumerate}
			\item[(i)] $\xdo$, the set of discrete unknowns, is a finite dimensional vector space on $\mb{R}$,
			\item[(ii)] $\pd:\xdo \rightarrow L^{\infty}(\Theta)$ is a linear piecewise constant reconstruction operator in the following sense: there exists a family of disjoint subsets $(\Theta_{i})_{i\in B}$ of $\Theta$ such that $\pd u=\sum_{i\in B}u_{i}\mathbbm{1}_{\Theta_{i}}$ for all $u= \sum_{i\in B}u_{i}\mv[e]_{i}\in\xdo$, where $B$ be the finite set of degree of freedom and $\mathbbm{1}_{\Theta_{i}}$ is the characteristic function of $\Theta_{i}$,
			\item[(iii)] The linear map $\dd: \xdo\rightarrow (L^{2}(\Theta))^{d}$ is the reconstructed discrete gradient such that $ \|\cdot\|_{\mc{D}}:= \|\dd \cdot\|_{L^{2}(\Theta)}$ is a norm on $\xdo$,
			\item[(iv)] 
            $\mc{I}_{\mc{D}}:Y\rightarrow\xdo$ is an interpolation operator which construct a discrete vector in the space of unknowns from an initial condition in $Y:=\{v\in L^{2}(\Theta): \zeta(v)\in H^1_0(\Theta)\}$,
			\item[(v)] $t^{(0)}=0<t^{(1)}<\cdots<t^{(N)}=T$ is a uniform time discretisation with a constant time step $\delta t_{\mc{D}}:=t^{(n+1)}-t^{(n)}$.
        \end{enumerate}
  
For each family $(v^{(n)})_{n=0,\cdots,N}\in\xdo$, we define piecewise-constant-in-time functions $\pd v:[0,T]\rightarrow L^{\infty}(\Theta)$, $\dd v:(0,T]\rightarrow (L^{2}(\Theta))^{d}$ and $\pd v^{(n)}(\bs{x}) \in L^\infty$ by:  
\[
\begin{aligned}
	&\pd v(0,\bs{x}):=\pd v^{(0)}(\bs{x}),\; \pd v(t,\bs{x}):=\pd v^{(n+1)}(\bs{x}), \\ &\dd v(t,\bs{x}):=\dd v^{(n+1)}(\bs{x}), \;\forall\; t\in(t^{(n)},t^{(n+1)}],\\ &  \text{for a.e.}\; \bs{x}\in \Theta,\; \forall\; n=0,\cdots,N-1.
\end{aligned}
\]
 
Moreover, the piecewise constant property of $\pd$ implies 
\begin{equation}\label{eq:pcro1}
\pd g(v)=g(\pd v)\quad \forall v\in \xdo,    
\end{equation}
where for any $g:\mb{R}\rightarrow\mb{R}$ such that $g(0)=0$ and $v=(v_i)_{i\in B}\in\xdo$, we set $g(v)=(g(v_i))_{i\in B}\in \xdo$. 
\end{definition}

We use standard notations and function spaces in the functional analysis: $L^2(\Theta)$, $L^\infty(\Theta)$, $H_0^1(\Theta)$, or its dual $H^{-1}(\Theta)$. $L^2(0, T; X)$ is the space of $X$-valued measurable functions that are square integrable in the sense of Bochner, where $X$ is  one of the spaces before.  $\lip \cdot,\cdot \rip$ denotes the inner product in $L^2(\Theta)$, or the duality pairing between $H^{-1}(\Theta)$ and $H^{1}_0(\Theta)$. $\| \cdot \|$ is  the norm in $L^2(\Theta)$, or the straightforward extension to $L^2(\Theta)^d$, and $\| \cdot \|_\infty$ the $L^\infty$ norm in $\Theta$ or in $(0, T] \times \Theta$. Where the meaning is  obvious, we replace $u(t,x)$ by $u$ or $u(t)$. $C$ denotes a generic positive constant independent of the discretisation parameters or the function itself. 

We will also use the following assumptions:
\begin{enumerate}[label=(A\arabic*)]
    \item\label{asum:zeta}  $\zeta: \mathbb R \rightarrow \mathbb R $ is increasing and Lipschitz continuous, with Lipschitz constant $L_\zeta > 0$. We also suppose that $\zeta(0) = 0$ and that $\zeta$ is coercive in the sense that there exists $c,d>0$ such that $|\zeta(s)|\ge c|s|-d$ for all $s, d\in\mathbb{R}$.
    \item\label{assum:W} Given the stochastic basis $(\Omega, \mathcal{F},\mathbb F=(\mathcal{ F}_{t})_{t\in [0,T]},\mathbb P)$, assume $W=\{W(t);t\in [0,T]\}$ is a Wiener process adapted to the filtration $\mb{F}$ taking values in a separable Hilbert space $\mathcal{K}(\Theta)$ with covariance operator $\mathcal{Q}$ such that $Tr(\mathcal{Q})<\infty$.
	\item\label{assum:f} $f:L^{2}\rightarrow \lkl$ is a continuous operator, where $\lkl$ is the Banach space of Hilbert-Schmidt operators with norm denoted as $\| \cdot \|_{\lkl}$. There exist $C_{1},\;C_{2}>0$ such that, for any $v\in L^{2}(\Theta)$ and $\Xi(v)\in L^{1}(\Theta)$, we assume
\begin{equation*}\label{eq:asum f}
\left\|f(\zeta (v)) \right\|^{2}_{\mathcal{L}(\mc{K}(\Theta),L^{2}(\Theta))}\le C_{2}\int_{\Theta}\Xi(v)dx+C_{1},
\end{equation*}
where $\Xi(v):=\int_{0}^{v}\zeta (s)ds$.
\item\label{assum:u0} $u_{0}:\Theta\rightarrow \mb{R}$ is a measurable function and $\Xi(u_0)\in L^1(\Theta)$.
\end{enumerate}

We define the discrete filtration $(\mc{F}^{n}_{N})_{0\le n\le N}$ by:
$$\mc{F}^{n}_{N}:=\sigma\{W(t^{k}):0\le k\le n\}\qquad\forall n=0,\ldots,N.$$

\begin{algo}[Gradient Scheme (GS) for \eqref{eq:sp}]\label{Algo}
Set $u^{(0)}:=\mc{I}_{\mc{D}}u_{0}$ and take random variables $u(\cdot)=(u^{(n)}(\omega,\cdot))_{n=0,\cdots,N}\in \xdo$ such that $u$ is adapted to the filtration $(\mc{F}^{n}_{N})_{0\le n\le N}$ and, for $n=1,\cdots, N$, for any $\varphi\in\xdo$ and for a.s. $\omega\in\Omega$, we have
\begin{equation}\label{eq:gs2}
\begin{aligned}
&\lip  \pd u^{n}(\omega), \pd\varphi\rip+\delta t_{\mc{D}}\lip \dd\zeta(u^{n})(\omega),\dd\varphi\rip\\&=\lip  \pd u^{n-1}(\omega), \pd\varphi\rip+\lip  f(\pd\zeta(u^{n-1}))(\omega)\Delta^{n}W(\omega),\pd\varphi\rip,
\end{aligned}
\end{equation}
where $\Delta^{(n)}W=W(t^{n})-W(t^{n-1})$.	
\end{algo}

Hereon, $\omega$ is omitted for legibility.

The convergence of this scheme is analysed in \cite{GDM_PorousMedium} for the deterministic case, and in \cite{DRONIOU2024114} for the stochastic case. For example, the error for the more regular variable $\zeta(u)$ is of order $\delta t + h$, see \cite[Corollary 2.1]{GDM_PorousMedium}. Here, we focus only on the analysis of numerical solvers used to solve the nonlinear system associated with this scheme. Throughout this article, $C$ represents an arbitrary constant which depends on $f,T,\mc{Q},u_{0},\Lambda$ and $\zeta$. Any further dependencies will be written in its subscript.  

%\subsection{The regularised problem}
\subsection{Regularisation-based approaches}

A popular technique when dealing with degenerate problem is to regularise the problem, i.e. to approximate $\zeta$ by a function $\zeta_\varepsilon$ having the derivative bounded away from 0 and from $\infty$ (in the fast diffusion case).
More precisely, with $\varepsilon > 0$ being a given parameter, we let $\zeta_\varepsilon : \R \to \R$ be a perturbation of $\zeta$ satisfying 
\begin{equation}
    \label{eq:zetaeps}
    \varepsilon \leq \zeta_\varepsilon^\prime \leq L_\zeta. 
    % \text{ and } \|\zeta_\varepsilon(u) - \zeta(u)\|_{L^2(0,T; L^2)} \leq C \varepsilon|u| 
\end{equation}
 % (see \cite[Theorem 5.1]{Epperson1986} for further details). 
 A possible choice is 
\bea[eq:zetamax]
\zeta_\varepsilon(u) = \int_0^u \max\{\varepsilon, \zeta^\prime(s)\} \ d s. 
\eea
The resulting regularised equations have $\varepsilon$-dependent solutions $u_\varepsilon$ that approximate the one for the original, degenerate problem well, see \cite{Epperson1986}. 
%To be precise, in the deteministic case one gets for the $L^2$ errors (see \cite{Epperson1986}, Theorem 5.2)
%\begin{equation}
%    \label{eq:Epperson}
%    \|u-u_\varepsilon\| \leq C^\varepsilon^{1/2}, \text{ and } \|\zeta(u)-\zeta_\varepsilon(u_\varepsilon)\| \leq C \varepsilon.
%\end{equation}
Moreover, regularisation offers more possibilities to develop numerical algorithms. One is inspired by the fact that the unknown $u$ has typically a lower regularity than $\zeta(u)$, and therefore approximating the latter is more efficient. Once an approximation is found, since the regularised function $\zeta_\varepsilon$ is invertible, finding an approximation of $u$ is then straightforward.

Two different approaches are studied here in connection with regularisation. The first is to leave the regularised problem in terms of the less regular unknown $u_\varepsilon$. This makes sense since $u_\varepsilon$ has the same regularity as $\zeta_\varepsilon(u_\varepsilon)$. The drawback is that, in this case, the nonlinearity appears under the Laplacian. This can be seen immediately, if one replaces $\zeta$ by $\zeta_\varepsilon$ in \eqref{eq:gs2}. 

The second approach uses the fact that $\zeta_\varepsilon$ is invertible and rewrites \eqref{eq:gs2} in terms of the more regular variable $v_\varepsilon = \zeta_\varepsilon(u_\varepsilon$ and, obviously, $u_\varepsilon = \zeta^{-1}_\varepsilon(v_\varepsilon$. Note that, for the Stefan problem, this inverse does not exist since $\zeta$ remains constant on an interval. Therefore, for the second approach the regularisation is essential.  
Clearly, in this approach the Laplacian becomes linear, and the nonlinearity only appears as a zero-order term. All other terms, including the the nonlinear, second-order operator, are considered only on the known values from the preceding iteration or the initial guess, as follows from the formulation below.

Using $v_\varepsilon = \zeta_\varepsilon(u_\varepsilon)$ as the primary unknown, the regularised counterpart of \eqref{eq:sp} consists in finding the solution to
% becomes
\begin{equation}\label{eq:risp}
\begin{cases}
&d(\zeta_\varepsilon^{-1}(v_\varepsilon))-\text{div}(\nabla v_\varepsilon) dt=f(v_\varepsilon) dW_{t}\quad \text{in}\quad \Theta_{T}:=(0,T)\times \Theta,\\&
v_\varepsilon(0,\cdot)=\zeta_\varepsilon(u_{0}) \quad \text{in}\quad \Theta,\\&
v_\varepsilon=0\quad \text{on}\quad (0,T)\times \partial\Theta. 
\end{cases}
\end{equation}
For this, we define the regularised GS, which can also be seen as a regularisation of Algorithm \ref{Algo}:

\begin{algo}[Regularised Gradient Scheme (RGS)]\label{algo:rigs}
Set $v_\varepsilon^{(0)}:=\mc{I}_{\mc{D}}\zeta_\varepsilon(u_{0})$ and take random variables $v_\varepsilon(\cdot)=(v_\varepsilon^{(n)}(\omega,\cdot))_{n=0,\cdots,N}\in \xdo$ such that $v$ is adapted to the filtration $(\mc{F}^{n}_{N})_{0\le n\le N}$ and, for any $\varphi\in\xdo$ and for almost every $\omega\in\Omega$, we have, for $n=1,\cdots, N$,
\begin{equation}\label{eq:rigs}
\begin{aligned}
&\lip  \pd \zeta_\varepsilon^{-1}(v_\varepsilon^{n}), \pd\varphi\rip+\delta t_{\mc{D}}\lip \dd v_\varepsilon^{n},\dd\varphi\rip\\&=\lip  \pd \zeta_\varepsilon^{-1}(v_\varepsilon^{n-1}), \pd\varphi\rip+\lip  f(\pd v_\varepsilon^{n-1})\Delta^{n}W,\pd\varphi\rip,
\end{aligned}
\end{equation}
where $\Delta^{n}W=W(t^{n})-W(t^{n-1})$.	
\end{algo}

The solution $v_\varepsilon$ in RGS plays the role of approximating $\zeta_\varepsilon(u_\varepsilon)$, and consequently $\zeta_\varepsilon^{-1}(v
_\varepsilon)$ is an approximation of $u_\varepsilon$. Moreover, as established in \cite[Theorem 5.1]{Epperson1986} and the subsequent discussion, it is known in the deterministic case that, for some constant $C$, the following inequality holds:
$$\|\zeta_\varepsilon(u_\varepsilon)-\zeta(u)\|_{L^2(\Theta_T)}\le C \varepsilon^{1/2}.$$
This implies, heuristically, that the RGS can introduce an error of about $\varepsilon^{1/2}$ to the approximation of the main GS \eqref{eq:gs2}. However this error can be controlled by an optimal choice of $\varepsilon$. To optimise it, we conducted a sensitivity analysis, detailed in the numerical experiments section. 

%\section{Non-linear iterative schemes}
\section{Linear iterative schemes}

We observe that Algorithms \ref{Algo} and \ref{algo:rigs} require solving a sequence of nonlinear problems. To find an approximation to their solution, linear iterative schemes are needed. In the remaining of this work, different schemes are considered, some of them employing a regularisation step as discussed in the previous section. 

%\st{Algorithms \ref{Algo} and \ref{algo:rigs} are nonlinear. } 
We start with the Newton method and then discuss two simple iterative schemes building on the L-Scheme mentioned before. The former is converging quadratically, but if the initial guess is close enough to the exact solution of the algebraic problem. Since, commonly, the initial guess is chosen as the solution computed at the previous time step, this induces a severe restriction on the time step, which also depends on the spatial discretisation and mesh, \cite{Radu_N}. Moreover, the discretisation matrix must be reassembled for every iteration, which can be quite time consuming. The latter schemes converge  linearly, but for mild restrictions on the time step, not depending on the spatial discretisation and mesh. Another advantage of the latter is that, at each time step, the operator on the left does not change with either the iteration, or with the realisation of the Wiener process. %This means that, for each iteration and realisation, only the right-hand side will change.
In the fully discrete case, if the spatial discretisation and time steps are not changed during the iterations, one solves a sequence of linear systems of the type $A y = b_i$, where the matrix $A$ remains the same, and only the vector on the right needs to be adjusted to $b_i$, $i \in \N_0$ being the iteration index. As a consequence, when using a direct linear solver, the matrix $A$ can be factorised once and for all (e.g., using a Choleski decomposition since $A$ is symmetric positive definite), and each nonlinear iteration is computationally extremely cheap.

In the rest of this section, we define the nonlinear solvers. As mentioned, the first is the classical Newton method, the other two are contraction-type methods, one for the original problem and the other for the regularised problem. In all cases, the initial guess at time $t_n$ consists in the solution obtained at time $t_{n-1}$.

\subsection{The Newton method}
Let $n \in \{1, \dots, N\}$ be fixed and let $i \in \N$ be the iteration index within a time step. In order to obtain the approximate solution for $u^n$ using the Newton method, consider $u^{n,i}=u^{n,i-1}+\delta u^{n,i}$ in \eqref{eq:gs2} and expand the Taylor series of $\zeta (u^{n,i})$ around $u^{n,i-1}$. %
% \corr{}{}{NL: $u^{n,i}$ is a discrete function in space and time, how can you write $\Delta\zeta(u^{n,i})$}\corr{}{}{[JD: I propose a slight change in wordings]}
% \begin{equation}\label{eq:Newton1}
% \left\{\begin{array}{rll}
% \delta u^{n,i} - \delta t \Delta \Big(\mathfrak{D}\zeta(u^{n,i-1})\delta u^{n,i}\Big)=&   f(\zeta(u^{n-1}))dW_t+u^{n-1}&\\
% &-u^{n,i-1}+\delta t\Delta\zeta(u^{n,i-1})
%  &\mbox{ in } \Theta,\\
% \zeta(u^{n, i})                        =& 0        &\mbox{ on } \partial\Theta, 
% \end{array}
% \right.
% \end{equation}
% where $\mathfrak{D}$ represents directional derivative. \corr{
% A weak solution to this iterative scheme is defined below.
The rigorous way of writing this iteration is through the following definition.

\begin{definition}[N solver]
    \label{def:newton}
    For $n\in \{0,\cdots, N\}$, the Newton solver (N) for \eqref{eq:gs2} consists in finding a sequence $(u^{n, i})_{i\in \NN}$ such that, for all $i\in\NN$, $u^{n,i}\in\xdo$ satisfies
    \begin{equation}\label{weak.sol.Newton} 
        \begin{split}
        \lip \delta (\pd u^{n,i}),\pd\varphi\rip &+  \delta t_{\mc{D}}\lip\dd\Big(   \zeta'( u^{n, i-1})\; \delta u^{n,i}\Big), \dd \varphi\rip\\ &=\lip \pd u^{n-1},\pd \varphi\rip + \lip  f(\zeta(\pd u^{n-1}))\Delta^{n}W, \pd \varphi\rip\\& - \lip \pd u^{n,i-1},\pd \varphi\rip -\delta t_{\mc{D}}\lip \dd \zeta( u^{n, i-1}), \dd \varphi\rip.
        \end{split}
    \end{equation}
    for all $\varphi \in \xdo$.
\end{definition}%

Since $\zeta' (u^{n,i-1})$ must be evaluated in every iteration, the Newton method requires assembling the complete linear system for each iteration in a fully discrete case.

\subsection{The linear iterative scheme without regularisation}
To approximate the solution of the GS defined in Algorithm \ref{eq:gs2} for the non-regularised problem,  we consider the constant $L \geq L_\zeta/2$, and define the linearised solver for \eqref{eq:gs2} as follows:
\begin{definition}[L solver]
    \label{def:lin_non_reg}
    For $n\in \{1,\cdots, N\}$, the linearised solver (L) for \eqref{eq:gs2} is a sequence $(u^{n,i})_{i\in\NN}$ such that, for all $i\in\NN$, $u^{n,i}\in\xdo$ satisfies
    \begin{equation}\label{eq:lin_non_reg} 
\begin{split}
     \lip  \pd u^{n, i},\pd \varphi\rip + & \delta t_{\mc{D}}L  \lip  \dd u^{n, i}, \dd \varphi\rip \\
     ={}& \delta t_{\mc{D}} \lip  L \dd u^{n, i-1} - \dd \zeta(u^{n, i-1}), \dd \varphi\rip \\
     &+  \lip \pd u^{n-1}, \pd\varphi\rip + \lip  f(\zeta(\pd u^{n-1}))\Delta^{n}W, \pd\varphi\rip.  
\end{split}
\end{equation}
for all $\varphi \in \xdo$.
\end{definition}

This solver is similar to the one discussed in %\st{\cite{droniou2018gradient}}
\cite{GDM_PorousMedium}, where the problem is formulated as a coupled system, in terms of the pair $(u^{n, i}, w^{n, i})$ with the unknown $w^{n, i}$ given by $w^{n, i} = L \big(u^{n, i} - u^{n, i-1}\big)  + \zeta(u^{n, i-1})$. In \eqref{eq:lin_non_reg} we have eliminated $w^{n,i}$ by simple substitution. %\SP{, see also \cite{Smeets}}.

In the following lemma, we show the convergence of sequence of iterations $\{u^{n, i}, i \in \N\}$ (in \eqref{eq:lin_non_reg}) to $u^{n}$ (in \eqref{eq:gs2})
%using the error
% \begin{equation}
%     \label{eq:errorsu}
%     \eui = u^n - u^{n, i},  %\quad \text{ and } \euzi = \zeta(u^n) - \zeta(u^{n, i}),  
% \end{equation}
% which is esteblished 
in a dual norm $\|\cdot\|_{*,\mc{D}}$ on $\pd (\xdo)\subset L^{2}(\theta)$ which is defined as: for all $v\in \pd(\xdo)$
$$
\|v\|_{*,\mc{D}}:=\sup\left\{ \int_{\Theta} v(x)\pd w(x) dx\;: w\in \xdo,\;\|\dd w\|=1\right\}.
$$
This convergence holds for any initial guess, but a natural choice is $u^{n, 0} = u^{n-1}$.

\begin{lemma}\label{lem:conv_unreg}
Let $L \geq L_\zeta/2$, $n \in \{1, \dots, N\}$ and assume that $u^{n-1} \in \xdo$ is known. Further, let $u^{n}$ be a weak solution to the GS \ (Algorithm \ref{Algo}) and consider the linearised solver \eqref{eq:lin_non_reg}, with an arbitrary initial guess $u^{n, 0} \in \xdo$. Then, the sequence $\{u^{n, i}, i \in \N\}$ converges in the dual norm $\|{\cdot}\|_{*,\mc{D}}$ to $u^{n}$. 
\end{lemma}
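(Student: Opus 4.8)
The plan is to reduce the convergence of $\{u^{n,i}\}$ to the decay of an auxiliary flux error. Following the coupled-system viewpoint mentioned after Definition \ref{def:lin_non_reg}, I set $e^{n,i}:=u^{n,i}-u^{n}$, introduce the auxiliary variable $w^{n,i}:=L(u^{n,i}-u^{n,i-1})+\zeta(u^{n,i-1})\in\xdo$, so that \eqref{eq:lin_non_reg} reads $\lip\pd u^{n,i},\pd\varphi\rip+\delta t_{\mc{D}}\lip\dd w^{n,i},\dd\varphi\rip=(\text{data})$ with the data being exactly the right-hand side of \eqref{eq:gs2}. Writing $E_w^{n,i}:=w^{n,i}-\zeta(u^{n})$ and subtracting \eqref{eq:gs2} from \eqref{eq:lin_non_reg}, the data cancels and I obtain the error identity
\begin{equation*}
\lip\pd e^{n,i},\pd\varphi\rip+\delta t_{\mc{D}}\lip\dd E_w^{n,i},\dd\varphi\rip=0\qquad\forall\varphi\in\xdo.
\end{equation*}
Each iterate is well defined, since the bilinear form on the left of \eqref{eq:lin_non_reg} is coercive for $\|\cdot\|_{\mc{D}}$ (as $L,\delta t_{\mc{D}}>0$ and $\|\dd\cdot\|$ is a norm on $\xdo$), so Lax--Milgram applies.

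First I would extract the dual norm. By the error identity and Cauchy--Schwarz, for every $\varphi\in\xdo$ one has $|\lip\pd e^{n,i},\pd\varphi\rip|\le\delta t_{\mc{D}}\|\dd E_w^{n,i}\|\,\|\dd\varphi\|$; dividing by $\|\dd\varphi\|$ and taking the supremum over $\varphi$, which is the definition of $\|\cdot\|_{*,\mc{D}}$, yields
\begin{equation*}
\|\pd e^{n,i}\|_{*,\mc{D}}\le\delta t_{\mc{D}}\,\|\dd E_w^{n,i}\|.
\end{equation*}
Thus it suffices to prove that $\|\dd E_w^{n,i}\|\to0$ as $i\to\infty$.

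The core is an energy estimate obtained by testing the error identity with $\varphi=E_w^{n,i}\in\xdo$, giving $\lip\pd e^{n,i},\pd E_w^{n,i}\rip+\delta t_{\mc{D}}\|\dd E_w^{n,i}\|^{2}=0$. Here the choice of test function is decisive: setting $\psi^{n,i-1}:=\zeta(u^{n,i-1})-\zeta(u^{n})$, the piecewise-constant identity \eqref{eq:pcro1} gives $\pd\psi^{n,i-1}=\zeta(\pd u^{n,i-1})-\zeta(\pd u^{n})$ pointwise, so the monotonicity and Lipschitz character of $\zeta$ become usable through the reconstruction $\pd$ rather than through the discrete gradient $\dd$. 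Expanding $\pd E_w^{n,i}=L(\pd e^{n,i}-\pd e^{n,i-1})+\pd\psi^{n,i-1}$, I would treat the $L$-term by the polarisation identity, use the pointwise monotonicity $\lip\pd e^{n,i-1},\pd\psi^{n,i-1}\rip\ge0$ together with $\|\pd\psi^{n,i-1}\|^{2}\le L_\zeta\lip\pd e^{n,i-1},\pd\psi^{n,i-1}\rip$ (a consequence of $\zeta$ being increasing and $L_\zeta$-Lipschitz), and absorb the remaining cross term $\lip\pd e^{n,i}-\pd e^{n,i-1},\pd\psi^{n,i-1}\rip$ by Young's inequality against the positive contribution $\tfrac{L}{2}\|\pd e^{n,i}-\pd e^{n,i-1}\|^{2}$. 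The constraint $L\ge L_\zeta/2$ is exactly what makes the coefficient of $\lip\pd e^{n,i-1},\pd\psi^{n,i-1}\rip$ nonnegative, leaving the recursion
\begin{equation*}
\tfrac{L}{2}\|\pd e^{n,i}\|^{2}+\big(1-\tfrac{L_\zeta}{2L}\big)\lip\pd e^{n,i-1},\pd\psi^{n,i-1}\rip+\delta t_{\mc{D}}\|\dd E_w^{n,i}\|^{2}\le\tfrac{L}{2}\|\pd e^{n,i-1}\|^{2}.
\end{equation*}

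To finish, I would sum this inequality over $i=1,\dots,m$: the $\|\pd e^{n,\cdot}\|^{2}$ terms telescope and the two nonnegative middle terms accumulate, giving $\delta t_{\mc{D}}\sum_{i\ge1}\|\dd E_w^{n,i}\|^{2}\le\tfrac{L}{2}\|\pd e^{n,0}\|^{2}<\infty$ for any initial guess $u^{n,0}$. Hence $\|\dd E_w^{n,i}\|\to0$, and the dual-norm bound forces $\|\pd e^{n,i}\|_{*,\mc{D}}\to0$, i.e. $u^{n,i}\to u^{n}$ in $\|\cdot\|_{*,\mc{D}}$. I expect the main obstacle to be precisely the handling of the nonlinearity sitting under the discrete gradient: a direct energy estimate tested with $e^{n,i}$ produces the term $\lip\dd\psi^{n,i-1},\dd e^{n,i}\rip$, which has no sign, because $\dd$ is a nonlocal reconstruction that does not commute with the pointwise action of $\zeta$. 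The entire argument is arranged to bypass this by pushing the nonlinear increment into the $\pd$-inner product, where \eqref{eq:pcro1} restores pointwise monotonicity.
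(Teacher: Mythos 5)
Your proof is correct, and every step checks out: the error identity follows from the cancellation of the (identical) right-hand sides of \eqref{eq:lin_non_reg} and \eqref{eq:gs2}, the flux error $E_w^{n,i}=L(e^{n,i}-e^{n,i-1})+\psi^{n,i-1}$ does lie in $\xdo$, the inequality $\|\pd\psi^{n,i-1}\|^2\le L_\zeta\lip\pd e^{n,i-1},\pd\psi^{n,i-1}\rip$ is valid for an increasing $L_\zeta$-Lipschitz $\zeta$ once \eqref{eq:pcro1} makes the estimate pointwise, and the condition $L\ge L_\zeta/2$ enters exactly where you say, as nonnegativity of the coefficient $1-\tfrac{L_\zeta}{2L}$ after Young's inequality. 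However, your route is genuinely different from the paper's. The paper does not test with the flux error: it tests the error equation with the solution $G^{n,i}\in\xdo$ of a discrete Poisson problem with datum $\pd e_u^{n,i}$, which converts all terms into $\pd$-inner products against $\pd e_u^{n,i}$ (using $\|\dd G^{n,i}\|=\|\pd e_u^{n,i}\|_{*,\mc{D}}$), and then invokes the pointwise contraction estimate of Lemma \ref{lem:zetaLab}, $|L(a-b)-(\zeta(a)-\zeta(b))|\le L|a-b|$ for $L\ge L_\zeta/2$, followed by Cauchy--Schwarz, Young, and a telescopic sum of $2\|\pd e_u^{n,i}\|_{*,\mc{D}}^2+\delta t_{\mc{D}}L\|\pd e_u^{n,i}\|^2\le \delta t_{\mc{D}}L\|\pd e_u^{n,i-1}\|^2$. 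The paper's argument thus controls the dual norm \emph{directly} at each iteration, at the price of the auxiliary Poisson lift from \cite[Definition 4.15]{droniou2018gradient}; the constraint on $L$ is absorbed into the one-sided contraction lemma. Your energy argument in the spirit of the classical L-scheme avoids the auxiliary problem entirely, exploits the monotone structure of $\zeta$ through polarisation and the Lipschitz--monotonicity inequality, and buys strictly more information: beyond the monotone decay of $\|\pd e^{n,i}\|$ (which both proofs yield), you obtain summability of $\delta t_{\mc{D}}\|\dd E_w^{n,i}\|^2$, i.e.\ $L^2$-convergence of the reconstructed flux gradient $\dd w^{n,i}\to\dd\zeta(u^n)$ --- a statement about gradients that the paper's proof does not provide and that is interesting in view of Remark \ref{rem:convL}, where the lack of gradient control is flagged as the weakness of this solver. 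The transfer to the dual norm at the end (with the extra factor $\delta t_{\mc{D}}$) plays the role that the Poisson lift plays in the paper, so the two proofs are in this respect dual to one another. Your closing observation --- that testing naively with $e^{n,i}$ produces the signless term $\lip\dd\psi^{n,i-1},\dd e^{n,i}\rip$ because $\dd$ does not commute with the pointwise action of $\zeta$ --- correctly identifies the obstruction that both arguments are engineered to circumvent.
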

\begin{proof}
Let the error
\begin{equation*}
    \label{eq:errorsu}
    \eui := u^n - u^{n, i}  %\quad \text{ and } \euzi = \zeta(u^n) - \zeta(u^{n, i}),  
\end{equation*}
and subtract \eqref{eq:lin_non_reg} from \eqref{eq:gs2} to obtain, for any $\varphi \in \xdo$,
\begin{equation}
    \label{eq:Lem_l_1}
\begin{split}
    \lip  \pd \eui,\varphi\rip + & \delta t_{\mc{D}}L \lip \dd \eui, \dd \varphi\rip \\
    & = \delta t\lip  L \dd \eum - \dd \big(\zeta(u^{n}) - \zeta(u^{n, i-1})\big), \dd \varphi\rip. 
\end{split}
\end{equation}

We will use as test function in \eqref{eq:Lem_l_1} the discrete solution to the Poisson problem 
$$
\left\{
\begin{array}{rcll}
- \Delta G &=& \eui, \quad &\text{ in } \Theta\\
G &=& 0 &\text{ at } \partial \Theta,
\end{array}
\right.
$$
as given by \cite[Definition 4.15]{droniou2018gradient}: $G^{n,i}\in\xdo$ is the solution of
\begin{equation}
    \label{eq:Lem_l_2}
    \lip \dd G^{n, i}, \dd \psi\rip = \lip \pd\eui, \pd \psi\rip\qquad\forall \psi \in \xdo.
\end{equation}
Taking $\varphi = G^{n, i}$ in \eqref{eq:Lem_l_1} we obtain 
\[\begin{split}
    \lip \pd \eui, \pd G^{n, i}\rip + & \delta t_{\mc{D}}L \lip\dd \eui, \dd G^{n, i}\rip \\
    & = \delta t_{\mc{D}}\lip L \dd \eum - \dd \big(\zeta(u^{n}) - \zeta(u^{n, i-1})\big), \dd G^{n, i}\rip.  
\end{split}
\]
Using $\eui$, $\eum$ and $\zeta(u^{n}) - \zeta(u^{n, i-1})$ as test functions in \eqref{eq:Lem_l_2}, and the equality $\| \dd G^{n, i}\| = \|\pd\eui\|_{*,\mc{D}}$  (see \cite[Eq.~(4.29)]{droniou2018gradient}), together with \eqref{eq:pcro1}, one obtains  
\begin{multline*}
    \|\pd \eui\|_{*,\mc{D}}^2 + \delta t_{\mc{D}}L \|\pd \eui\|^2 \\
    = 
    \delta t_{\mc{D}}\lip L\;\pd \eum - \big(\zeta(\pd u^{n}) - \zeta(\pd u^{n, i-1})\big),  \pd\eui\rip.  
\end{multline*}

%There exist $C_{x,n,i}$ such that $\zeta(\pd u^{n}) - \zeta(\pd u^{n, i-1})=\zeta'(C_{x,n,i})\pd\eum$. 
Using Lemma \ref{lem:zetaLab} (see Appendix) with $a=u^n$ and $b=u^{n,i}$, and the H\"{o}lder inequality, this implies
\begin{equation}
    \|\pd \eui\|_{*,\mc{D}}^2 + \delta t_{\mc{D}}L \|\pd \eui\|^2\le \delta t_{\mc{D}}L\lip |\pd \eum| ,  |\pd\eui|\rip  
\end{equation}
%, using the properties of 
% Since  $L \geq \cfrac 1 2 L_\zeta$ and  $0\le\zeta'(\cdot)\le \mc{L}_\zeta$ implies $\|L-\zeta'(C_{x,n,i})\|_{L^\infty}\le L$, we have 
% \begin{equation}
%     \|\pd \eui\|_{*,\mc{D}}^2 + \delta t_{\mc{D}}L \|\pd \eui\|^2\le \delta t_{\mc{D}}L \lip |\pd \eum |,  |\pd\eui|\rip.  
% \end{equation}
The Cauchy-Schwarz and Young inequalities then implies
% \begin{multline*}
%     \|\pd \eui\|_{*,\mc{D}}^2 + \delta t_{\mc{D}}L \|\pd \eui\|^2
%     \le
%     \delta t_{\mc{D}}\frac{|L-\zeta'(c)|^2}{2L}\|\pd \eum\|^2 + \frac{L}{2}\|\pd\eui\|^2.  
% \end{multline*}
$$
    \|\pd \eui\|_{*,\mc{D}}^2 + \delta t_{\mc{D}}L \|\pd \eui\|^2 \leq  
    \cfrac {\delta t_{\mc{D}}L} 2 \|\pd \eui\|^2  + \cfrac {\delta t_{\mc{D}}L} {2}\|\pd \eum\|^2,    
$$
or, after multiplication by 2 and reducing the terms in $\eui$, 
\begin{equation}
    \label{eq:7}
    2 \|\pd \eui\|_{*,\mc{D}}^2 + \delta t_{\mc{D}}L \|\pd \eui\|^2 \leq \delta t_{\mc{D}}L \|\pd \eum\|^2.  
\end{equation}
One can add the inequality in \eqref{eq:7} for $i = 1, 2, \dots$ to obtain, by telescopic sum,
\begin{equation}
    \label{eq:8}
    2 \sum_{i = 1}^\infty \|\eui\|_{*,\mc{D}}^2 \leq \delta t_{\mc{D}}L \|e_u^{n, 0}\|^2 < \infty,   
\end{equation}
implying that the series on the left is absolutely convergent. This gives the desired result.
\end{proof}
%\hfill $\Box$

\begin{remark}\label{rem:convL}
    The convergence result in Lemma \ref{lem:conv_unreg} imposes no restriction on the time step $\delta t_{\mc{D}}$. It can be extended straightforwardly to any spatial discretisation and mesh. However, the convergence is given only in the dual norm, and without providing a convergence order. 
    When compared to other convergence results, as obtained e.g. for regularisation-based iteration schemes for Algorithm \ref{algo:rigs}, the convergence result in Lemma \ref{lem:conv_unreg} is weaker. A possible explanation for this is that, for such type of problems, $u$ has a low regularity and may even become discontinuous. The linear scheme in \eqref{eq:lin_non_reg} involves the discrete gradients $\dd u^{n, i-1}$ and $\dd u^{n, i}$, which are an approximation of $\nabla u$ at the time step $n$. Around discontinuities, $\dd u^{n, i}$ becomes steeper when refining the mesh, which reduces the speed of convergence. This effect may be reduced in the Newton iteration, as $L$ under $\dd$ is replaced by $\zeta'(u^{n, i-1})$ and, if the method converges, this vanishes for the values where the discontinuity may occur. To deal with such issues, in Definition \ref{def:srniwl} a simple iterative method is discussed that uses the regularised $\rzeta$, but not its inverse.
\end{remark}

\subsection{The linear iterative scheme for the regularised problem}\label{sec:reg}

For some $L>0$, we now introduce a regularised-linearised solver, with a regularisation as defined in \eqref{eq:zetaeps}, to approximate the regularised GS (Algorithm \ref{algo:rigs}). The constant $L$ only depends upon the regularisation parameter $\varepsilon$ and is specified in the next lemma. 
% \begin{equation}\label{rsolver-1}
% \left\{\begin{array}{rll}
% L \big(v_\varepsilon^{n, i} - v_\varepsilon^{n, i-1}\big) + \zeta_\varepsilon^{-1}(v_\varepsilon^{n, i-1}) - \delta t \Delta v_\varepsilon^{n, i} &= \zeta_\varepsilon^{-1}(v_\varepsilon^{n-1}) + f(v_\varepsilon^{n-1})dW_t        &\mbox{ in } \Theta,\\
% v_\varepsilon^{n, i}                        &= 0        &\mbox{ on } \partial\Theta,
% \end{array}
% \right.
% \end{equation}
% where $L > 0$ is a constant.  

\begin{definition}
    \label{def:weak.ireg}
Let $\zeta_\varepsilon$ be a regularisation of $\zeta$ satisfying \eqref{eq:zetaeps}. For $n\in \{1,\cdots, N\}$, an iterative solver for \eqref{eq:rigs} is a sequence  $(v_\varepsilon^{n,i})_{i\in\NN}$ such that, for all $i\in\NN$, $v_\varepsilon^{n,i}\in\xdo$ satisfies
\begin{equation}\label{eq:reg_is} 
\begin{split}
     L \lip  \pd v_\varepsilon^{n, i},\pd \varphi\rip + & \delta t_{\mc{D}}   \lip  \dd v_\varepsilon^{n, i}, \dd \varphi\rip \\
     ={}& \lip L\; \pd v_\varepsilon^{n, i-1} - \pd \zeta_\varepsilon^{-1}(v_\varepsilon^{n, i-1}),\pd \varphi\rip \\&+  \lip \pd \zeta_\varepsilon^{-1}(v_\varepsilon^{n-1}) , \pd\varphi\rip + \lip  f(\pd v_\varepsilon^{n-1})\Delta^{n}W, \pd\varphi\rip.  
\end{split}
\end{equation}
for all $\varphi \in \xdo$.
\end{definition}%
The initial guess $v_\varepsilon^{n, 0}$ can be chosen arbitrarily, but a reasonable choice is $v_\varepsilon^{n, 0} = v_\varepsilon^{n-1}$. 

Below we use the discrete Poincar{\' e} inequality,  
\begin{equation}\label{eq:poincare2}
\| \pd v\| \leq C_{\mc{D}} \|\dd v \|, 
\end{equation}
for all $v\in \xdo$, where $C_\mc{D}$ is a constant that only depends on discretisation $\mc{D}$ (refer to \cite[Remark 2.3]{droniou2018gradient} for further detail), but not on $v$. We introduce the following notations: for the error,
\begin{equation}
    \label{eq:errorsv}
    \evi = v_\varepsilon^n - v_\varepsilon^{n, i}, %\quad \text{ and }
\end{equation}
and for the norm
\begin{equation}
\label{eq:normX}    
  \|v\|_{X}^2:=\Big(L+\cfrac{\delta t_{\mc{D}}}{C_{\mc{D}}^2}\Big)\|\pd v\|^2+ \delta t_{\mc{D}} \|\dd v \|^2\qquad \forall v\in \xdo.
\end{equation}
%Note that this norm is equivalent to $\|\cdot\|_{\mathcal D}$.

\begin{lemma}\label{lem:conv_reg}
Let $L \ge \frac{1}{\varepsilon}$ and $n \in \{1, \dots, N\}$ be fixed, and assume that $v_\varepsilon^{n-1} \in \xdo$ is given. Further, let $v_\varepsilon^n$ be a weak solution to the RGS (Algorithm \ref{algo:rigs}) and consider the iteration in \eqref{eq:reg_is}, with an arbitrary initial guess $v_\varepsilon^{n, 0} \in \xdo$. Then, the sequence $\{v_\varepsilon^{n, i}, i \in \N\}$ converges in the $\| \cdot \|_X$ norm to $v_\varepsilon^n$. Moreover, one has, for all $i$,
$$
\| \evi \|_{X} \leq \alpha \| \evm \|_{X}, \quad \text{ where } \quad \alpha = \left(L - L_\zeta^{-1}\right) \left[L \left(L + \cfrac{\delta t}{C_{\mc{D}}^2}\right)\right]^{-\frac 1 2} < 1.
$$
\end{lemma}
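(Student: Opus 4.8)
The plan is to mirror the proof of Lemma \ref{lem:conv_unreg}: derive an equation for the iteration error, test it with a well-chosen function, and convert the resulting identity into a contraction. Observe first that substituting $\rv^{n,i}=\rv^{n,i-1}=\rv^n$ into \eqref{eq:reg_is} reproduces exactly the RGS equation \eqref{eq:rigs}, so the exact solution $\rv^n$ is a fixed point of the iteration. Subtracting \eqref{eq:reg_is} from \eqref{eq:rigs} written in this fixed-point form, the terms carrying $\rv^{n-1}$ and $f(\pd\rv^{n-1})\Delta^n W$ cancel and, for every $\varphi\in\xdo$,
\[
L\lip \pd\evi,\pd\varphi\rip+\delta t_{\mc{D}}\lip \dd\evi,\dd\varphi\rip=\lip L\,\pd\evm-\big(\pd\rzeta^{-1}(\rv^{n})-\pd\rzeta^{-1}(\rv^{n,i-1})\big),\pd\varphi\rip.
\]
Using the piecewise-constant property \eqref{eq:pcro1} to commute $\pd$ with $\rzeta^{-1}$, the right-hand side becomes $\lip g(\pd\rv^n)-g(\pd\rv^{n,i-1}),\pd\varphi\rip$, where $g:=L\,\mathrm{id}-\rzeta^{-1}$ acts componentwise. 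I would then take $\varphi=\evi$, obtaining the identity $L\|\pd\evi\|^2+\delta t_{\mc{D}}\|\dd\evi\|^2=\lip g(\pd\rv^n)-g(\pd\rv^{n,i-1}),\pd\evi\rip$.

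The conceptual core is the behaviour of $g$. From \eqref{eq:zetaeps} one has $\varepsilon\le\rzeta'\le L_\zeta$, hence $(\rzeta^{-1})'\in[L_\zeta^{-1},\varepsilon^{-1}]$ and therefore $g'=L-(\rzeta^{-1})'\in[L-\varepsilon^{-1},\,L-L_\zeta^{-1}]$. The hypothesis $L\ge \varepsilon^{-1}$ is precisely what forces $g'\ge 0$, so that $g$ is nondecreasing and Lipschitz with constant exactly $L-L_\zeta^{-1}$. Combining this Lipschitz bound with the Cauchy--Schwarz inequality in the identity above yields the basic estimate
\[
L\|\pd\evi\|^2+\delta t_{\mc{D}}\|\dd\evi\|^2\le (L-L_\zeta^{-1})\,\|\pd\evm\|\,\|\pd\evi\|.
\]

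It remains to promote this to a contraction in $\|\cdot\|_X$. Applying Young's inequality to the right-hand side with weight $L$ gives $(L-L_\zeta^{-1})\|\pd\evm\|\|\pd\evi\|\le \tfrac{L}{2}\|\pd\evi\|^2+\tfrac{(L-L_\zeta^{-1})^2}{2L}\|\pd\evm\|^2$; absorbing the first term and multiplying by two leaves $L\|\pd\evi\|^2+2\delta t_{\mc{D}}\|\dd\evi\|^2\le \tfrac{(L-L_\zeta^{-1})^2}{L}\|\pd\evm\|^2$. The Poincar{\'e} inequality \eqref{eq:poincare2} bounds the extra zeroth-order contribution $\tfrac{\delta t_{\mc{D}}}{C_{\mc{D}}^2}\|\pd\evi\|^2$ appearing in $\|\cdot\|_X$ by $\delta t_{\mc{D}}\|\dd\evi\|^2$, so from \eqref{eq:normX} one gets $\|\evi\|_X^2\le L\|\pd\evi\|^2+2\delta t_{\mc{D}}\|\dd\evi\|^2\le \tfrac{(L-L_\zeta^{-1})^2}{L}\|\pd\evm\|^2$. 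Finally, \eqref{eq:normX} also gives $\|\pd\evm\|^2\le (L+\delta t_{\mc{D}}/C_{\mc{D}}^2)^{-1}\|\evm\|_X^2$, and feeding this in produces $\|\evi\|_X^2\le \alpha^2\|\evm\|_X^2$ with the stated $\alpha$. Since $0\le L-L_\zeta^{-1}<L<(L(L+\delta t_{\mc{D}}/C_{\mc{D}}^2))^{1/2}$ one has $\alpha<1$, and iterating yields $\|\evi\|_X\le\alpha^i\|e_v^{n,0}\|_X\to0$, i.e.\ convergence in $\|\cdot\|_X$.

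The main obstacle is the bookkeeping in this last step: the test-function identity only controls the combination $L\|\pd\evi\|^2+\delta t_{\mc{D}}\|\dd\evi\|^2$, which is strictly smaller than $\|\evi\|_X^2$, so the sharp constant $\alpha$ is recovered only through the precise interplay between the weight chosen in Young's inequality and the gradient energy reallocated via Poincar{\'e}; I would verify carefully that the coefficient $(L+\delta t_{\mc{D}}/C_{\mc{D}}^2)$ matches on both sides and that no spurious factor of two survives. The only other delicate point is confirming that the Lipschitz constant of $g$ is genuinely $L-L_\zeta^{-1}$ rather than $\varepsilon^{-1}-L$, which again rests entirely on the sign condition $L\ge\varepsilon^{-1}$.
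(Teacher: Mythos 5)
Your proof is correct and follows essentially the same route as the paper: subtract \eqref{eq:reg_is} from \eqref{eq:rigs}, test with the error, exploit that $L\,\mathrm{id}-\zeta_\varepsilon^{-1}$ is nondecreasing and $(L-L_\zeta^{-1})$-Lipschitz under $L\ge\varepsilon^{-1}$ (exactly the content of Lemma~\ref{lem:ZLab}, which you rederive directly from $(\zeta_\varepsilon^{-1})'\in[L_\zeta^{-1},\varepsilon^{-1}]$), then apply Cauchy--Schwarz, Young with weight $L$, and the discrete Poincar\'e inequality \eqref{eq:poincare2} to reach the contraction with the stated $\alpha$. Your final bookkeeping—reallocating one gradient term via Poincar\'e and bounding $\|\pd e_{v_\varepsilon}^{n,i-1}\|^2$ by $\bigl(L+\delta t_{\mc D}/C_{\mc D}^2\bigr)^{-1}\|e_{v_\varepsilon}^{n,i-1}\|_X^2$—is the paper's own manipulation, merely written out in two explicit steps.
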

\begin{proof}
% Define the error 
% \begin{equation}
%     \label{eq:errorsv}
%     \evi = v_\varepsilon^n - v_\varepsilon^{n, i}, %\quad \text{ and } \evzi = \zeta_\varepsilon^{-1}(v_\varepsilon^n) - \zeta_\varepsilon^{-1}(v_\varepsilon^{n, i}),  
% \end{equation}
% and 
Subtracting \eqref{eq:reg_is} from \eqref{eq:rigs} and plugging $\evi= v_\varepsilon^n - v_\varepsilon^{n, i}$ to get
\begin{equation}\label{eq:1lm2}
    \begin{aligned}
        & L \lip \pd \evi, \pd \varphi\rip + \delta t_{\mc{D}} \lip \dd \evi, \dd \varphi \rip
        \\
        & \hskip 2em = L \lip \pd e_{v_\varepsilon}^{n,i-1},\pd \varphi\rip 
        -\lip \rzeta^{-1}(\pd v_\varepsilon^{n}) - \rzeta^{-1}(\pd v_\varepsilon^{n,i-1}),\pd \varphi\rip. 
    \end{aligned}
\end{equation}       
Choosing $\varphi=\evi$ and taking upper bound using absolute value, we deduce
\begin{equation}\label{eq:1lm3}
    \begin{aligned}
        & L\|\pd \evi\|^2 + \delta t_{\mc{D}} \|\dd \evi\|^2 \\
        & \hskip 2em \le 
        \lip \left|L (\pd e_{v_\varepsilon}^{n,i-1}) -\left( \rzeta^{-1}(\pd v_\varepsilon^{n}) - \rzeta^{-1}(\pd v_\varepsilon^{n,i-1})\right)\right|,\left|\pd \evi\right|\rip. 
    \end{aligned}
\end{equation}  
Since $L\ge\varepsilon^{-1}$ and, by \eqref{eq:zetaeps}, $(L_\zeta)^{-1} \le (\rzeta^{-1})'\le \varepsilon^{-1}$, using Lemma~\ref{lem:ZLab} (see Appendix) with $Z=\rzeta^{-1}$, $L_Z=(L_\zeta)^{-1}$ and $a-b=\pd \evi$, we obtain
\begin{equation*}\label{eq:1lm31}
    \begin{aligned}
        L \lip \pd \evi,\pd \varphi\rip +\delta t_{\mc{D}} \lip \dd \evi, \dd \varphi \rip
        &\le (L-L_\zeta^{-1})\left|\lip |\pd \evm|,|\evi|\rip \right|. 
    \end{aligned}
\end{equation*} 
Applying the H\"{o}lder inequality, we get
\begin{equation*}
        L\|\pd \evi\|^2 +\delta t_{\mc{D}} \|\dd \evi\|^2 \\
        \le  (L-L_\zeta^{-1})\| \pd e_{v_\varepsilon}^{n,i-1}\| \|\pd \evi\|.
\end{equation*}%
% Since $L > \varepsilon^{-1}$ and using \eqref{eq:zetaeps}, $0 < L-\varepsilon^{-1} \le L-L_\zeta^{-1}$  implies $\|L-(\rzeta^{-1})'(D_{x,n,i})\|_{L^\infty}\le (L-L_\zeta^{-1})$, we deduce
% \begin{equation*}
%         L\|\pd \evi\|^2 +\delta t_{\mc{D}} \|\dd \evi\|^2 \\
%         \le  (L-L_\zeta^{-1}) \| \pd e_{v_\varepsilon}^{n,i-1}\| \|\pd \evi\|.
% \end{equation*}
Using the Young inequality and after multiplication by 2 one obtains 
$$
   L\|\pd \evi\|^2+ 2\delta t_{\mc{D}} \|\dd \evi \|^2 \le \cfrac{(L-L_\zeta^{-1}) ^2}{L} \|\pd e_{v_\varepsilon}^{i-1}\|^2.
$$
% Since $L\ge \frac{1}{2}(\frac{1}{\varepsilon}+\frac{1}{L_\zeta})$ implies $\|L-(\rzeta^{-1})'(D_{x,n,i})\|_{L^\infty}\le |L-L_\zeta^{-1}|$, we deduce
% $$
%    L\|\pd \evi\|^2+ 2\delta t_{\mc{D}} \|\dd \evi \|^2 \le \cfrac{|L-L_\zeta^{-1}| ^2}{L} \|\pd e_{v_\varepsilon}^{i-1}\|^2.
% $$
Finally, using the discrete Poincar\'e inequality, we infer
\begin{multline*}
       \Big(L+\cfrac{\delta t_{\mc{D}}}{C_{\mc{D}}^2}\Big)\|\pd \evi\|^2+ \delta t_{\mc{D}} \|\dd \evi \|^2  \\
       \le \cfrac{\Big(L-L_\zeta^{-1}\Big)^2}{L\Big(L+\cfrac{\delta t_{\mc{D}}}{C_{\mc{D}}^2}\Big)} \Bigg[\Big(L+\cfrac{\delta t_{\mc{D}}}{C_{\mc{D}}^2}\Big)\|\pd e_{v_\varepsilon}^{i-1}\|^2+\delta t_{\mc{D}} \|\dd \evi \|^2\Bigg],
\end{multline*}
where the term involving $\|\dd \evi \|$ in the right-hand side has been added by non-negativity. Recalling the notations introduced in \eqref{eq:errorsv} and \eqref{eq:normX}, this can be rewritten as
%\begin{equation}
$$
\| \evi\|_{X}\le \alpha \| \evm\|_{X},    
$$
%\end{equation}
with the contraction constant $\alpha$ defined by 
\begin{equation}\label{eq:alpha}
    \alpha =(L-L_\zeta^{-1})\left[L\Big(L+\cfrac{\delta t_{\mc{D}}}{C_{\mc{D}}^2}\Big)\right]^{-1/2} < 1.
\end{equation}
\end{proof}
\begin{remark}\label{remark:reg_plateau}%\label{rm:reg_i}
Clearly, the larger $L$, the closer the convergence rate, $\alpha$, is to 1. Moreover, as $L>1/\varepsilon$, an intention to reduce the regularisation error force to choose a larger $L$ and eventually slower convergence. One chooses $L$ as small as possible, to ensure a rapid convergence. %
%Let $L= \frac{1}{2}(\frac{1}{\varepsilon}+\frac{1}{L_\zeta})$
% $$
% \alpha \le \left(\cfrac 1 \varepsilon - \cfrac 1 {L_\zeta}\right) 
% \left[
% \left(\cfrac 1 \varepsilon + \cfrac 1 {L_\zeta}\right) \left(\cfrac 1 \varepsilon + \cfrac 1 {L_\zeta} + \cfrac {2 \tau} {C_\Omega^2}\right)\right]^{-\frac 1 2} = \cfrac {\left({L_\zeta} - \varepsilon\right)} 
% {(L_\zeta + \varepsilon) \left(L_\zeta + \varepsilon   + \cfrac {2 \tau \varepsilon L_\zeta} {C_\Omega^2}\right)^{\frac 1 2}}<1.
% $$
%  Similarly, f
Optimistically, the value of $\alpha$ can not be better than the value for $L =  1 /\varepsilon$, that is
\begin{align*}
\alpha &= \left( 1 - \cfrac \varepsilon  {L_\zeta}\right) 
\left(1  + \cfrac {\varepsilon \delta t_{\mc{D}}} {C_\Omega^2}\right)^{-\frac 1 2}%\\ &
%= \left( 1 - \cfrac \varepsilon  {L_\zeta}\right) 
%\left(1  - \cfrac {\varepsilon \delta t_{\mc{D}}} {2C_\Omega^2}+\cfrac {3\varepsilon^2 \delta t_{\mc{D}}^2} {8C_\Omega^4}-\cdots\right)
.  
\end{align*}
From the above, one observes that, the smaller $\delta t_{\mc{D}}$ and $\varepsilon$ are, the closer the convergence rate is to 1. %\SP
{In the deterministic setting, in \cite{Epperson1986} it is shown that the regularisation adds an $L^2$ error of order $\varepsilon^{1/2}$ in terms of $u$, respectively of order $\varepsilon$ in terms of $\zeta(u)$. Therefore, to ensure that the regularisation error does not dominate the error induced by the discretisation in time, when choosing $\varepsilon$ one should take $\delta t_{\mc{D}}$ into account.} 
This implies that the regularised solver \eqref{eq:reg_is} becomes computationally expensive w.r.t. the number of iterations, as $\delta t_{\mc{D}}$ approaches zero. 
%\end{remark}
%\begin{remark}\label{remark:reg_plateau}
This slow convergence is due to \emph{inverting} the regularised nonlinearity $\rzeta$. For the Stefan problem, as $\zeta$ is constant for $u\in[0,1]$, the regularisation defined in \eqref{eq:zetamax} yields $v=\zeta_\varepsilon(u)=\varepsilon u$ in this interval, so $L \ge \varepsilon^{-1}$.%\st{on this plateau, with $\zeta_\varepsilon^{-1}(v)=v/\varepsilon $ for $v\in [0,\varepsilon]$.} \st{Examining the first term, $L\pd \zeta_\varepsilon^{-1}(v_\varepsilon^{n, i-1})$, on the right-hand side of }
%\eqref{eq:reg_is}, 
%\st{this reduces to $(L/\varepsilon)\pd v_\varepsilon^{n,i-1}$ on the plateau. It is evident that this will blow up with the condition $L>1/\varepsilon$ as $\varepsilon$ approaches to zero. This observation underscores the need to employ regularisation methods that avoid inversion.}
\end{remark}

As follows from Remark \ref{rem:convL}, employing   regularisation is justified by stronger convergence results or improved stability of the schemes compared to those for algorithms without regularisation. On the other hand, Remark \ref{remark:reg_plateau} recommends avoiding the use of $\rzeta^{-1}$. Letting again the regularisation $\rzeta$ satisfy \eqref{eq:zetaeps}, as for Algorithm \ref{algo:rigs} we consider the following iterative scheme.
%\st{We defined the regularised solver (R) without inversion, satisfying \eqref{eq:zetaeps}, for the RGS (Algorithm \ref{algo:rigs}) as follows:}
% \begin{equation}\label{eq:4}
% \left\{\begin{array}{rll}
% u_\varepsilon^{n, i} - \delta t \Delta \Big(L \big(u_\varepsilon^{n, i} - u_\varepsilon^{n, i-1}\big)  + \zeta_\varepsilon(u_\varepsilon^{n, i-1}) \Big) & = u_\varepsilon^{n-1} + \delta t f(u^{n-1})
%  &\mbox{ in } \Theta,\\
% u_\varepsilon^{n, i}                        &= 0        &\mbox{ on } \partial\Theta. 
% \end{array}
% \right.
% \end{equation}
% where the weak solution is defined in
\begin{definition}[R solver]
    \label{def:srniwl}
    Let $\zeta_\varepsilon$ be a regularisation of $\zeta$ satisfying \eqref{eq:zetaeps}. For $n\in \{1,\cdots, N\}$, the linearised regularised solver (R) for \eqref{eq:gs2} is a sequence $(u_\varepsilon^{n, i})_{i\in\NN}$ such that, for all $i\in\NN$, $u_\varepsilon^{n,i}\in\xdo$ satisfies% consists in finding $u^{n, i} \in \xdo$  such that, ,
\begin{equation}\label{eq:rlwithoutInv} 
\begin{split}
     \lip  \pd u_\varepsilon^{n, i},\pd \varphi\rip + & \delta t_{\mc{D}}L  \lip  \dd u_\varepsilon^{n, i}, \dd \varphi\rip \\
   ={}& \delta t_{\mc{D}} \lip  L \dd u_\varepsilon^{n, i-1} - \dd \zeta_\varepsilon(u_\varepsilon^{n, i-1}), \dd \varphi\rip \\&+  \lip \pd u_\varepsilon^{n-1}, \pd\varphi\rip + \lip  f(\zeta_\varepsilon(\pd u_\varepsilon^{n-1}))\Delta^{n}W, \pd\varphi\rip,  
\end{split}
\end{equation}
for all $\varphi \in \xdo$.
\end{definition}
The convergence of the sequence $(u_\varepsilon^{n,i})_{i\in\NN}$ towards $\zeta^{-1}_\varepsilon(v_\varepsilon^n)$, where $v_\varepsilon^{n}$ solves \eqref{eq:rigs}, can be obtained following the steps in the proof of Lemma \ref{lem:conv_unreg}.

\section{Numerical experiments}
In this section, we compare the computational efficiency of the Newton method (N), the linearised solver (L) and the regularised solver (R), as defined in \ref{def:newton}, \ref{def:lin_non_reg} and \ref{def:srniwl} respectively. The comparison is made in terms of their computation time for both the deterministic and stochastic cases of the Stefan problem. To ensure a fair comparison, each simulation was repeated at least 10 times, and the minimum CPU time among the repetitions was recorded. These repetitions compare the intrinsic efficiency of each numerical method without being influenced by external system overheads (e.g., background processes, random system delays), which can vary across runs and are not related to the method itself. Since such overheads can artificially inflate the CPU time, taking the average over several runs would still retain some of this randomness and may not reflect the true computational cost of the algorithm. Instead, by repeating each test multiple times and reporting the minimum CPU time, we aim to approximate the best-case scenario -- that is, the time closest to what the algorithm would achieve in the absence of external disturbances. This approach is commonly used when the goal is to evaluate the pure performance characteristics of a numerical method.

\subsection{The deterministic case}

We begin with the deterministic case, considering a non-homogeneous Dirichlet boundary condition where $\Theta=(0,1)^{2}$, $T=1$,
\begin{equation*}
	\zeta(u) =	
	\begin{cases}
		u, & \text{if } u \leq 0\\
		1, & \text{if } 0\le u\leq 1\\
		u-1, & \text{if } 1\le u
	\end{cases}	
\end{equation*}
and $f=0$, for which there exist an analytical solution
\begin{equation*}
	u(x_{1},x_{2},t) =	
	\begin{cases}
		2\exp (t-x_{1})-1\quad (>1), & \text{if } x_{1} < t\\
		\exp (t-x_{1})-1\quad (<0), & \text{if } t<x_{1}.		
	\end{cases}	
\end{equation*}
The following results are obtained using the mass-lumped $\mathcal P^1$ (MLP1) finite elements and the code which is available at \url{https://github.com/awaismaths/LR_Stefan}. The numerical computations are performed for a uniform time step, for which we use a simplified notation $\delta t$ instead of $\delta t_{\mc{D}}$. The key distinctions between each approach are as follows:
\begin{itemize}
    \item Newton: In this method, the linearised system (e.g., $A u=b$)  is re-assembled at each iteration across all the time steps, requiring the decomposition of the coefficient matrix (e.g., through Cholesky decomposition, $A=LL^T$) at every step. Although this incurs a higher computational cost per iteration, Newton's method benefits from super-linear convergence. This means, fewer iterations are needed to achieve convergence. However, this convergence is guaranteed under restrictions on $\delta t$ with respect to the mesh $h$.
    \item Linearised: The coefficient matrix ($A$) remains unchanged throughout all iterations for all time steps. This allows for significant speedup by pre-computing the decomposition (e.g. $A=LL^t$) just once at the beginning. However, more iterations are generally required due to linear convergence, but for without any restriction on $\delta t$ or $h$.
    \item Regularised: Similar to the linearised solver, the coefficient matrix remains constant across iterations and time steps. However, the regularisation parameter can be adjusted to enhance the efficiency. This adjustment is particular beneficial in terms of efficiency when the solution takes values in regions where $\zeta$ remains constant, see Remark \ref{remark:reg_plateau}.
\end{itemize}
\begin{table}%[h!]
										\centering
										\caption{Data for the triangular meshes}%
										\label{tab:tri_mesh}
										\begin{tabular}{|c|c|c|c|c|}
												\hline
												Mesh& Size& Cells & Edges&Vertices\\
												\hline
												M1&    0.250&   56&     92&     37\\%16    
												M2&    0.125&   224&    352&    129\\%64
												M3&    0.0625&   896&    1376&   481\\%256
												M4&    0.0313&   3584&   5440&   1857\\%400
												M5&    0.0156&   14336&   21632&   7297\\%1024
												M6&    0.0078&   57344&  86272&  28929\\%4096
												\hline	
											\end{tabular}%
	\end{table}%
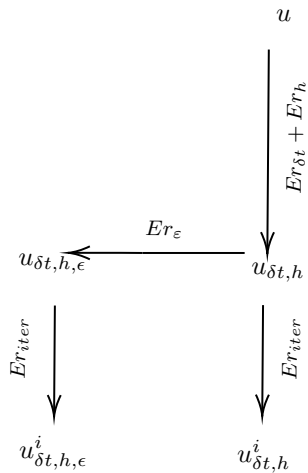
\begin{figure}[!ht]
\centering
\tikzset{every picture/.style={line width=0.75pt}} %set default line width to 0.75pt       
\begin{tikzpicture}[x=0.75pt,y=0.75pt,yscale=-1,xscale=1]
%uncomment if require: \path (0,300); %set diagram left start at 0, and has height of 300

%Straight Lines [id:da07450860788731584] 
\draw    (309.92,148.1) -- (361.43,148.1) ;
%Straight Lines [id:da8397005502331796] 
\draw    (309.92,148.1) -- (298.88,148.03) -- (274.43,148.03) ;
\draw [shift={(272.43,148.03)}, rotate = 360] [color={rgb, 255:red, 0; green, 0; blue, 0 }  ][line width=0.75]    (10.93,-3.29) .. controls (6.95,-1.4) and (3.31,-0.3) .. (0,0) .. controls (3.31,0.3) and (6.95,1.4) .. (10.93,3.29)   ;
%Straight Lines [id:da29733106936888576] 
\draw    (373.62,45.42) -- (373.34,84.7) -- (372.89,146.55) ;
\draw [shift={(372.88,148.55)}, rotate = 270.41] [color={rgb, 255:red, 0; green, 0; blue, 0 }  ][line width=0.75]    (10.93,-3.29) .. controls (6.95,-1.4) and (3.31,-0.3) .. (0,0) .. controls (3.31,0.3) and (6.95,1.4) .. (10.93,3.29)   ;
%Straight Lines [id:da6921648979360255] 
\draw    (265.72,174.63) -- (265.33,228.77) ;
\draw [shift={(265.32,230.77)}, rotate = 270.41] [color={rgb, 255:red, 0; green, 0; blue, 0 }  ][line width=0.75]    (10.93,-3.29) .. controls (6.95,-1.4) and (3.31,-0.3) .. (0,0) .. controls (3.31,0.3) and (6.95,1.4) .. (10.93,3.29)   ;
%Straight Lines [id:da012996961309218458] 
\draw    (370.71,174.42) -- (370.32,228.55) ;
\draw [shift={(370.31,230.55)}, rotate = 270.41] [color={rgb, 255:red, 0; green, 0; blue, 0 }  ][line width=0.75]    (10.93,-3.29) .. controls (6.95,-1.4) and (3.31,-0.3) .. (0,0) .. controls (3.31,0.3) and (6.95,1.4) .. (10.93,3.29)   ;

% Text Node
\draw (375.81,23.84) node [anchor=north west][inner sep=0.75pt]  [rotate=-0.41]  {$u$};
% Text Node
\draw (309.83,130.23) node [anchor=north west][inner sep=0.75pt]  [font=\footnotesize,rotate=-0.41]  {$Er_{\varepsilon }$};
% Text Node
\draw (381.03,119.97) node [anchor=north west][inner sep=0.75pt]  [font=\footnotesize,rotate=-270.41]  {$Er_{\delta t} +Er_{h}$};
% Text Node
\draw (363.4,151.52) node [anchor=north west][inner sep=0.75pt]  [rotate=-0.41]  {$u_{\delta t,h}$};
% Text Node
\draw (245.87,147.03) node [anchor=north west][inner sep=0.75pt]  [rotate=-0.41]  {$u_{\delta t,h,\epsilon }$};
% Text Node
\draw (241.83,213.38) node [anchor=north west][inner sep=0.75pt]  [font=\footnotesize,rotate=-270.41]  {$Er_{iter}$};
% Text Node
\draw (378.84,213.53) node [anchor=north west][inner sep=0.75pt]  [font=\footnotesize,rotate=-270.41]  {$Er_{iter}$};
% Text Node
\draw (246.29,239.05) node [anchor=north west][inner sep=0.75pt]  [rotate=-0.41]  {$u_{\delta t,h,\epsilon }^{i}$};
% Text Node
\draw (356.28,240.84) node [anchor=north west][inner sep=0.75pt]  [rotate=-0.41]  {$u_{\delta t,h}^{i}$};

\end{tikzpicture}
\caption{Types of errors}
\label{fig:approx_figure}
\end{figure}
%\st{There are three primary types of errors, see Figure \ref{fig:approx_figure}, that influence the } 
The accuracy and efficiency of these solvers are influenced by three primary types of errors: time discretisation error ($Er_{\delta t}$), space discretisation error ($Er_{h}$), and nonlinear system approximation error ($Er_{it}$), which is measured by the $L^2$ norm of the residual of the nonlinear system. In the regularised method, an additional error ($Er_\varepsilon$) arises from the regularisation parameter. These errors are presented in Figure \ref{fig:approx_figure}. For an efficient approximation of the fully-discrete solution, all these errors should be in balance. More precisely, the time step, the mesh, the regularisation parameter and the tolerance parameter in the iterative solver should be chosen in such a way that the corresponding errors have the same order. For example, taking a very low tolerance to reduce drastically the error $Er_{it}$ will not impact significantly the overall error if $\delta t$ or $h$ and, if applicable, $\varepsilon$ remain large. %\st{Moreover, the Newton method has an advantage over the other two methods due to its quadratic convergence, reaching machine precision quickly, while the L and R methods continue to converge toward the tolerance. } 
%\SP
{In this context, we mention that, if converging and due to its quadratic convergence, the Newton method will provide within few iterations a solution with a very small $Er_{it}$, while many more iterations are requested for the L and R methods. However, the overall error may still remain large, as it is dominated by the other error components, $Er_{\delta t}$ and $Er_{h}$.}
%\st{Therefore, setting an unnecessarily stringent tolerance can lead the L and R methods to perform more iterations, reducing their efficiency. } 
Hence, to minimise the overall error %\SP
{and for the sake of efficiency}, each %\st{contributing} 
error %\SP
{component} must be finely tuned, implying that the tolerance should depend on both space and time discretisation, and on $\varepsilon$ in case of the regularised method.

To gain deeper insights, we conducted a sensitivity analysis of these methods. In this analysis, we considered the following error norms calculated at the final time $T$, where the indices $e$ and $a$ represent the exact solution of the deterministic Stefan problem and approximate solution up to the tolerance defined below, respectively:
\begin{align}
    E_{\zeta} & = \frac{\|\zeta(u^T_e) - \zeta(u^T_a)\|_{L^2(\Theta)}}{\|\zeta(u^T_e)\|_{L^2(\Theta)}} \quad \text{and} \\
    E_{\nabla\zeta} &= \frac{\|\nabla\zeta(u^T_e) - \nabla\zeta(u^T_a)\|_{L^2(\Theta)}}{\|\nabla\zeta(u^T_e)\|_{L^2(\Theta)}}.
    \end{align}
To determine an appropriate tolerance level, we introduce parameters $C_{\text{tol}} = \{ 1, 10 ,10^2, 10^3, 10^4 \}$ and $C_\varepsilon=\{1, 10^{1}, 10^{2}, 10^{3} \}$ and define the tolerance and the regularisation parameters as 
\begin{equation}\label{eq:C_Param}
tol = \frac{\min(\delta t^2, h^2)}{C_{\text{tol}}}, \quad \text{ and } \quad \varepsilon = \frac{\min(\delta t, h)}{C_\varepsilon}. 
\end{equation}
By the sensitivity analysis using these parameters, we intend to get numerically optimal values of $\text{tol}$ and $\varepsilon$.  This approach minimises unnecessary iterations, thereby optimising the computational time while maintaining the desired accuracy. In the first experiment, we fixed %$\varepsilon = \min(\delta t^2, h^2)$ 
$C_\varepsilon = 1$ and varied $C_{\text{tol}}$. Figures \ref{fig:tol_mesh1_3} and \ref{fig:tol_mesh4_6} present a series of plots for various choices of $\delta t$ (decreasing from left to right) and $h$ (decreasing from top to bottom). Each plot depicts the errors $E_{\zeta}$ and $E_{\nabla\zeta}$ (on the y-axis) for fixed values of $\delta t$ and $h$, while varying $C_{\text{tol}}$ (on the x-axis).
%To be uncommented
\input{chapters/fig/sam1_3}%
\input{chapters/fig/sam4_6}%

 In these figures, immediately noticeable is the fact that, for $C_{\text{tol}}\ge 100$, all of these methods shows almost the same relative errors. This means that $C_{\text{tol}}=100$ is an optimal choice that works for each solver without compromising on accuracy. We further notice that $C_{\text{tol}}=10$ is also a good choice for $\delta t=0.1$ which works for all mesh sizes on a negligible cost. For $\delta t=0.01$, there is a noticeable difference in the error, which is more prominent in Figure \ref{fig:tol_mesh4_6}. However, the relative difference of the error with respect to the larger one is less than a unit, i.e., bounded by an order of magnitude. %(??? I want to say that the difference $\frac{10^{-a}-10^{-b}}{10^{-a}}<1$ for $b\ge a$).
 Therefore, $C_{\text{tol}}=10$ is also a reasonable choice to test the efficiency, whereas choosing one of them ($C_{\text{tol}}=10$or $100$) depends on whether one prefers to trade off accuracy for efficiency or vice versa. 

One more thing to note is that the error plots for the Newton are flat in most of the cases. This is because, in these cases, Newton has already attained the saturation, i.e., further iterations will not improve the approximation. This is due to the fact that, when the tolerance is not too large, because of the 
quadratic convergence Newton goes from just above the tolerance to machine precision in only one iteration. See Table \ref{tab:residue_comp} where "$res$" represents the $L^2$ norm of the residue of the non-linear system. We see that the residue by the Newton method is saturated at $C_{\text{tol}}\ge 100$ while the rest of methods are moving along with the tolerance. %
\begin{table}[h!]
\centering
\begin{tabular}{|l|l|l|l|l|}
\hline
$C_{tol}$    & $tol$  & res\_N   & res\_L   & res\_R   \\ \hline
1    & 1.25E-01 & 1.61E-03 & 7.11E-03 & 6.91E-03 \\ \hline
10   & 1.25E-02 & 1.16E-04 & 7.59E-04 & 7.44E-04 \\ \hline
100  & 1.25E-03 & 4.04E-17 & 7.99E-05 & 7.91E-05 \\ \hline
1000 & 1.25E-04 & 4.04E-17 & 8.17E-06 & 7.99E-06 \\ \hline
\end{tabular}
\caption{Residue of the nonlinear systems, $h=0.125$ and $dt=0.01$}
\label{tab:residue_comp}
\end{table}%

In the next experiment, we fixed $C_{\text{tol}}=100$ and varied $C_{\varepsilon} = \{1, 10, 10^2, 10^3 \}$. The results, as shown in Figure \ref{fig:ep_mesh1_5}, indicate that the errors for $C_{\varepsilon}=10$ are almost identical across different mesh sizes and time steps, with plots virtually overlapping. This implies that $C_{\text{tol}}=100$ and $C_{\varepsilon}=10$ can be chosen if one is extremely cautious about the error. % 
\begin{figure}\centering  
      \begin{minipage}{0.5\textwidth}
      \ref{epmesh1to3}
      \vspace{0.10cm}
      \end{minipage}  
    \begin{tikzpicture}[scale=0.7]
      \begin{loglogaxis}[
      ymin=0.001, ymax=0.3,
      name=plot1, ytick pos=left,
      legend columns=6, legend to name=epmesh1to3,
      ylabel={$h=0.25$},xlabel={$\delta t=0.1$}
      ]
        \addplot [mark=asterisk, mark options={solid,color=cyan}, cyan,dashed] table[x=C,y=L2z_N] {ep/eph.0625dt0.1.dat};
        \addlegendentry{$E^{N}_{\zeta(u)}$}
        \addplot [mark=o, mark options={solid,color=magenta}, magenta,dashed] table[x=C,y=L2z_L] {ep/eph.0625dt0.1.dat};
        \addlegendentry{$E^{L}_{\zeta(u)}$}
        \addplot [mark=diamond, mark options={solid,color=teal}, teal,dashed] table[x=C,y=L2z_R] {ep/eph.0625dt0.1.dat};
        \addlegendentry{$E^{R}_{\zeta(u)}$}
          \addplot [mark=asterisk, mark options={solid,color=cyan}, cyan] table[x=C,y=H1z_N] {ep/eph.0625dt0.1.dat};
        \addlegendentry{$E^{N}_{\nabla\zeta(u)}$}
        \addplot [mark=o, mark options={solid,color=magenta}, magenta] table[x=C,y=H1z_L] {ep/eph.0625dt0.1.dat};
       \addlegendentry{$E^{L}_{\nabla\zeta(u)}$}
        \addplot [mark=diamond, mark options={solid,color=teal}, teal] table[x=C,y=H1z_R] {ep/eph.0625dt0.1.dat};
       \addlegendentry{$E^{R}_{\nabla\zeta(u)}$}
        \end{loglogaxis}  
        \hskip 25pt
       \begin{loglogaxis} [ymin=0.001, ymax=0.3,name=plot2,at={(plot1.south east)},xlabel={$\delta t=0.01$}
      ]%,legend columns=3, legend to name=m1n100dmin,legend style={font=\scriptsize}]
        \addplot [mark=asterisk, mark options={solid,color=cyan}, cyan,dashed] table[x=C,y=L2z_N] {ep/eph.0625dt0.01.dat};
        %\addlegendentry{$E^{N}_{\zeta(u)}$}
        \addplot [mark=o, mark options={solid,color=magenta}, magenta,dashed] table[x=C,y=L2z_L]  {ep/eph.0625dt0.01.dat};
       % \addlegendentry{$E^{L}_{\zeta(u)}$}
        \addplot [mark=diamond, mark options={solid,color=teal}, teal,dashed] table[x=C,y=L2z_R] {ep/eph.0625dt0.01.dat};
       % \addlegendentry{$E^{R}_{\zeta(u)}$}
          \addplot [mark=asterisk, mark options={solid,color=cyan}, cyan] table[x=C,y=H1z_N] {ep/eph.0625dt0.01.dat};
        %\addlegendentry{$E^{N}_{\nabla\zeta(u)}$}
        \addplot [mark=o, mark options={solid,color=magenta}, magenta] table[x=C,y=H1z_L] {ep/eph.0625dt0.01.dat};
        %\addlegendentry{$E^{L}_{\nabla\zeta(u)}$}
        \addplot [mark=diamond, mark options={solid,color=teal}, teal] table[x=C,y=H1z_R] {ep/eph.0625dt0.01.dat};
        %\addlegendentry{$E^{R}_{\nabla\zeta(u)}$}
      \end{loglogaxis}
%2nd row
    \end{tikzpicture}
    \begin{tikzpicture}[scale=0.7]
      \begin{loglogaxis} [
      ymin=0.001, ymax=0.3,
      name=plot1, ytick pos=left,
      legend columns=6, 
      ylabel={$h=0.0313$},xlabel={$\delta t=0.1$}
      ]
        \addplot [mark=asterisk, mark options={solid,color=cyan}, cyan,dashed] table[x=C,y=L2z_N] {ep/eph.0313dt0.1.dat};
        %\addlegendentry{$E^{N}_{\zeta(u)}$}
        \addplot [mark=o, mark options={solid,color=magenta}, magenta,dashed] table[x=C,y=L2z_L] {ep/eph.0313dt0.1.dat};
        %\addlegendentry{$E^{L}_{\zeta(u)}$}
        \addplot [mark=diamond, mark options={solid,color=teal}, teal,dashed] table[x=C,y=L2z_R] {ep/eph.0313dt0.1.dat};
        %\addlegendentry{$E^{R}_{\zeta(u)}$}
          \addplot [mark=asterisk, mark options={solid,color=cyan}, cyan] table[x=C,y=H1z_N] {ep/eph.0313dt0.1.dat};
        %\addlegendentry{$E^{N}_{\nabla\zeta(u)}$}
        \addplot [mark=o, mark options={solid,color=magenta}, magenta] table[x=C,y=H1z_L] {ep/eph.0313dt0.1.dat};
       %\addlegendentry{$E^{L}_{\nabla\zeta(u)}$}
        \addplot [mark=diamond, mark options={solid,color=teal}, teal] table[x=C,y=H1z_R] {ep/eph.0313dt0.1.dat};
       %\addlegendentry{$E^{R}_{\nabla\zeta(u)}$}
        \end{loglogaxis}  
        \hskip 25pt
       \begin{loglogaxis} [ymin=0.001, ymax=0.3,name=plot2,at={(plot1.south east)},xlabel={$\delta t=0.01$}
      ]%,legend columns=3, legend to name=m1n100dmin,legend style={font=\scriptsize}]
        \addplot [mark=asterisk, mark options={solid,color=cyan}, cyan,dashed] table[x=C,y=L2z_N] {ep/eph.0313dt0.01.dat};
        %%\addlegendentry{$E^{N}_{\zeta(u)}$}
        \addplot [mark=o, mark options={solid,color=magenta}, magenta,dashed] table[x=C,y=L2z_L]  {ep/eph.0313dt0.01.dat};
       % %\addlegendentry{$E^{L}_{\zeta(u)}$}
        \addplot [mark=diamond, mark options={solid,color=teal}, teal,dashed] table[x=C,y=L2z_R] {ep/eph.0313dt0.01.dat};
       % %\addlegendentry{$E^{R}_{\zeta(u)}$}
          \addplot [mark=asterisk, mark options={solid,color=cyan}, cyan] table[x=C,y=H1z_N] {ep/eph.0313dt0.01.dat};
        %%\addlegendentry{$E^{N}_{\nabla\zeta(u)}$}
        \addplot [mark=o, mark options={solid,color=magenta}, magenta] table[x=C,y=H1z_L] {ep/eph.0313dt0.01.dat};
        %%\addlegendentry{$E^{L}_{\nabla\zeta(u)}$}
        \addplot [mark=diamond, mark options={solid,color=teal}, teal] table[x=C,y=H1z_R] {ep/eph.0313dt0.01.dat};
        %%\addlegendentry{$E^{R}_{\nabla\zeta(u)}$}
      \end{loglogaxis}
    \end{tikzpicture}
%3rd row
    \begin{tikzpicture}[scale=0.7]
      \begin{loglogaxis} [
      ymin=0.001, ymax=0.3,
      name=plot1, ytick pos=left,
      legend columns=6, 
      ylabel={$h=0.0156$},xlabel={$\delta t=0.1$}
      ]
        \addplot [mark=asterisk, mark options={solid,color=cyan}, cyan,dashed] table[x=C,y=L2z_N] {ep/eph.0156dt0.1.dat};
        %\addlegendentry{$E^{N}_{\zeta(u)}$}
        \addplot [mark=o, mark options={solid,color=magenta}, magenta,dashed] table[x=C,y=L2z_L] {ep/eph.0156dt0.1.dat};
        %\addlegendentry{$E^{L}_{\zeta(u)}$}
        \addplot [mark=diamond, mark options={solid,color=teal}, teal,dashed] table[x=C,y=L2z_R] {ep/eph.0156dt0.1.dat};
        %\addlegendentry{$E^{R}_{\zeta(u)}$}
          \addplot [mark=asterisk, mark options={solid,color=cyan}, cyan] table[x=C,y=H1z_N] {ep/eph.0156dt0.1.dat};
        %\addlegendentry{$E^{N}_{\nabla\zeta(u)}$}
        \addplot [mark=o, mark options={solid,color=magenta}, magenta] table[x=C,y=H1z_L] {ep/eph.0156dt0.1.dat};
       %\addlegendentry{$E^{L}_{\nabla\zeta(u)}$}
        \addplot [mark=diamond, mark options={solid,color=teal}, teal] table[x=C,y=H1z_R] {ep/eph.0156dt0.1.dat};
       %\addlegendentry{$E^{R}_{\nabla\zeta(u)}$}
        \end{loglogaxis}  
        \hskip 25pt
       \begin{loglogaxis} [ymin=0.001, ymax=0.3,name=plot2,at={(plot1.south east)},xlabel={$\delta t=0.01$}
      ]%,legend columns=3, legend to name=m1n100dmin,legend style={font=\scriptsize}]
        \addplot [mark=asterisk, mark options={solid,color=cyan}, cyan,dashed] table[x=C,y=L2z_N] {ep/eph.0156dt0.01.dat};
        %%\addlegendentry{$E^{N}_{\zeta(u)}$}
        \addplot [mark=o, mark options={solid,color=magenta}, magenta,dashed] table[x=C,y=L2z_L]  {ep/eph.0156dt0.01.dat};
       % %\addlegendentry{$E^{L}_{\zeta(u)}$}
        \addplot [mark=diamond, mark options={solid,color=teal}, teal,dashed] table[x=C,y=L2z_R] {ep/eph.0156dt0.01.dat};
       % %\addlegendentry{$E^{R}_{\zeta(u)}$}
          \addplot [mark=asterisk, mark options={solid,color=cyan}, cyan] table[x=C,y=H1z_N] {ep/eph.0156dt0.01.dat};
        %%\addlegendentry{$E^{N}_{\nabla\zeta(u)}$}
        \addplot [mark=o, mark options={solid,color=magenta}, magenta] table[x=C,y=H1z_L] {ep/eph.0156dt0.01.dat};
        %%\addlegendentry{$E^{L}_{\nabla\zeta(u)}$}
        \addplot [mark=diamond, mark options={solid,color=teal}, teal] table[x=C,y=H1z_R] {ep/eph.0156dt0.01.dat};
        %%\addlegendentry{$E^{R}_{\nabla\zeta(u)}$}
      \end{loglogaxis}
    \end{tikzpicture}
%4th row
    \begin{tikzpicture}[scale=0.7]
      \begin{loglogaxis} [
      ymin=0.001, ymax=0.3,
      name=plot1, ytick pos=left,
      legend columns=6,
      ylabel={$h=0.0078$},xlabel={$\delta t=0.1$}
      ]
        \addplot [mark=asterisk, mark options={solid,color=cyan}, cyan,dashed] table[x=C,y=L2z_N] {ep/eph.0078dt0.1.dat};
        %\addlegendentry{$E^{N}_{\zeta(u)}$}
        \addplot [mark=o, mark options={solid,color=magenta}, magenta,dashed] table[x=C,y=L2z_L] {ep/eph.0078dt0.1.dat};
        %\addlegendentry{$E^{L}_{\zeta(u)}$}
        \addplot [mark=diamond, mark options={solid,color=teal}, teal,dashed] table[x=C,y=L2z_R] {ep/eph.0078dt0.1.dat};
        %\addlegendentry{$E^{R}_{\zeta(u)}$}
          \addplot [mark=asterisk, mark options={solid,color=cyan}, cyan] table[x=C,y=H1z_N] {ep/eph.0078dt0.1.dat};
        %\addlegendentry{$E^{N}_{\nabla\zeta(u)}$}
        \addplot [mark=o, mark options={solid,color=magenta}, magenta] table[x=C,y=H1z_L] {ep/eph.0078dt0.1.dat};
       %\addlegendentry{$E^{L}_{\nabla\zeta(u)}$}
        \addplot [mark=diamond, mark options={solid,color=teal}, teal] table[x=C,y=H1z_R] {ep/eph.0078dt0.1.dat};
       %\addlegendentry{$E^{R}_{\nabla\zeta(u)}$}
        \end{loglogaxis}  
        \hskip 25pt
       \begin{loglogaxis} [ymin=0.001, ymax=0.3,name=plot2,at={(plot1.south east)},xlabel={$\delta t=0.01$}
      ]%,legend columns=3, legend to name=m1n100dmin,legend style={font=\scriptsize}]
        \addplot [mark=asterisk, mark options={solid,color=cyan}, cyan,dashed] table[x=C,y=L2z_N] {ep/eph.0078dt0.01.dat};
        %%\addlegendentry{$E^{N}_{\zeta(u)}$}
        \addplot [mark=o, mark options={solid,color=magenta}, magenta,dashed] table[x=C,y=L2z_L]  {ep/eph.0078dt0.01.dat};
       % %\addlegendentry{$E^{L}_{\zeta(u)}$}
        \addplot [mark=diamond, mark options={solid,color=teal}, teal,dashed] table[x=C,y=L2z_R] {ep/eph.0078dt0.01.dat};
       % %\addlegendentry{$E^{R}_{\zeta(u)}$}
          \addplot [mark=asterisk, mark options={solid,color=cyan}, cyan] table[x=C,y=H1z_N] {ep/eph.0078dt0.01.dat};
        %%\addlegendentry{$E^{N}_{\nabla\zeta(u)}$}
        \addplot [mark=o, mark options={solid,color=magenta}, magenta] table[x=C,y=H1z_L] {ep/eph.0078dt0.01.dat};
        %%\addlegendentry{$E^{L}_{\nabla\zeta(u)}$}
        \addplot [mark=diamond, mark options={solid,color=teal}, teal] table[x=C,y=H1z_R] {ep/eph.0078dt0.01.dat};
        %%\addlegendentry{$E^{R}_{\nabla\zeta(u)}$}
      \end{loglogaxis}
    \end{tikzpicture}
    %\subcaption{Discrete error norms verses $C$}
 % \end{minipage}
  \caption{ Relative errors (on $y$-axis) for $\zeta(u)$ with respect to exact solution in $L^2$ (above) and $H_1$ (below) norms with Newton, Linearised and regularised solvers vs $C_{\varepsilon}$ ( on $x$-axis)}
\label{fig:ep_mesh1_5}
\end{figure}%
On the other hand, the case of $C_{\varepsilon}=1$ shows a negligible impact on the errors, suggesting that this lower value of the $C_{\varepsilon}$ does not significantly influence the overall error in the solution. These plots do not include the case $dt=0.001$ because their graphs were overlapping with no observable differences. Since, $C_{\varepsilon}=1$ does not lead to a significant change in the errors and larger values of $C_{\varepsilon}$ increases the convergence rate and consequently the computational time, we will compare these methods based on computational time with $C_{\text{tol}}= \{1, 10, 100\}$ and $C_{\varepsilon}= \{1\}$ (which means the $\varepsilon$ is of order $\delta t$ in most of the case) in the next experiments.%
% To be uncommented
\begin{figure}\centering
\centerline{ \ref{EZ}}
\vspace{0.50cm}
\begin{minipage}{0.45\textwidth}
\begin{tikzpicture}[scale=0.75]
% \pgfplotsset{
%     every loglogaxis legend/.append style={
%         at={(0.5,1.03)},
%         anchor=south
%     },
% }
\begin{loglogaxis}[name=egraphEZ4,legend columns=3,legend to name=EZ, %tick align=outside,tick pos=lower,
        %xtick=\empty,
         %extra x ticks={37, 129, 481, 741, 1857},
         %extra x tick labels={37, 129, 481, 741, 1857},
        %extra x tick style={tick label style={rotate=0}},
        xlabel=$h$,
         ylabel=$E_\zeta$,x post scale =1%,y post scale =2.5
         ]         
         % \logLogSlopeTriangle{0.95}{0.4}{0.1}{1}{black};
         \addplot[cyan,mark=*, mark options={solid},] 
        table[x=h, y=NEZ] {chapters/dat_N/egraphC100ndt10.dat};
    \addlegendentry{$C_{tol}^N=100$}
        \addplot[magenta,thick,mark=*, mark options={solid},dashed] 
        table[x=h, y=LEZ] {chapters/dat_N/egraphC100ndt10.dat};
           \addlegendentry{$C_{tol}^L=100$}
             \addplot[teal,thick,mark=*, mark options={solid},dotted] 
         table[x=h, y=REZ] {chapters/dat_N/egraphC100ndt10.dat};
         \addlegendentry{$C_{tol}^R=100$}
         \addplot[cyan,mark=star, mark options={solid}] 
        table[x=h, y=NEZ] {chapters/dat_N/egraphC10ndt10.dat};
         \addlegendentry{$C_{tol}^N=10$}
        \addplot[magenta,thick,mark=star, mark options={solid},dashed] 
        table[x=h, y=LEZ] {chapters/dat_N/egraphC10ndt10.dat};
         \addlegendentry{$C_{tol}^L=10$}
             \addplot[teal,thick,mark=star, mark options={solid},dotted] 
         table[x=h, y=REZ] {chapters/dat_N/egraphC10ndt10.dat};
           \addlegendentry{$C_{tol}^R=10$}
          \addplot[cyan,mark=o, mark options={solid},] 
        table[x=h, y=NEZ] {chapters/dat_N/egraphC1ndt10.dat};
         \addlegendentry{$C_{tol}^N=1$}
        \addplot[magenta,thick,mark=o, mark options={solid},dashed] 
        table[x=h, y=LEZ] {chapters/dat_N/egraphC1ndt10.dat};
         \addlegendentry{$C_{tol}^L=1$}
             \addplot[teal,thick,mark=o, mark options={solid},dotted] 
         table[x=h, y=REZ] {chapters/dat_N/egraphC1ndt10.dat};
    \addlegendentry{$C_{tol}^R=1$}
               \end{loglogaxis}
\end{tikzpicture}
\subcaption{$\delta t=10^{-1}$}
\end{minipage}
\hskip 30pt
\begin{minipage}{0.45\textwidth}
\begin{tikzpicture}[scale=0.75]
        \begin{loglogaxis}[name=egraph3,%legend columns=6,legend to name=egraphdt100, %tick align=outside,tick pos=lower,
        %xtick=\empty,
         %extra x ticks={37, 129, 481, 741, 1857},
         %extra x tick labels={37, 129, 481, 741, 1857},
        %extra x tick style={tick label style={rotate=0}},
        xlabel=$h$,
         ylabel=$E_\zeta$,x post scale =1]%,y post scale =2.5
         % \logLogSlopeTriangle{0.95}{0.4}{0.1}{1}{black};
        \addplot[cyan,mark=*, mark options={solid},] 
        table[x=h, y=NEZ] {chapters/dat_N/egraphC100ndt100.dat};
        
        \addplot[magenta,thick,mark=*, mark options={solid},dashed] 
        table[x=h, y=LEZ] {chapters/dat_N/egraphC100ndt100.dat};
        
         \addplot[teal,thick,mark=*, mark options={solid},dotted] 
         table[x=h, y=REZ] {chapters/dat_N/egraphC100ndt100.dat};
       
        \addplot[cyan,mark=star, mark options={solid},] 
        table[x=h, y=NEZ] {chapters/dat_N/egraphC10ndt100.dat};
     
        \addplot[magenta,thick,mark=star, mark options={solid},dashed] 
        table[x=h, y=LEZ] {chapters/dat_N/egraphC10ndt100.dat};
       
         \addplot[teal,thick,mark=star, mark options={solid},dotted] 
         table[x=h, y=REZ] {chapters/dat_N/egraphC10ndt100.dat};
        
         \addplot[cyan,mark=o, mark options={solid},] 
        table[x=h, y=NEZ] {chapters/dat_N/egraphC1ndt100.dat};
   
        \addplot[magenta,thick,mark=o, mark options={solid},dashed] 
        table[x=h, y=LEZ] {chapters/dat_N/egraphC1ndt100.dat};
  
         \addplot[teal,thick,mark=o, mark options={solid},dotted] 
         table[x=h, y=REZ] {chapters/dat_N/egraphC1ndt100.dat};

        \end{loglogaxis}
\end{tikzpicture}
\subcaption{$\delta t=10^{-2}$}
\end{minipage}
\begin{minipage}{0.45\textwidth}
\begin{tikzpicture}[scale=0.75]
        \begin{loglogaxis}[name=egraph2,%legend columns=6,legend to name=egraphdt100, %tick align=outside,tick pos=lower,
        %xtick=\empty,
         %extra x ticks={37, 129, 481, 741, 1857},
         %extra x tick labels={37, 129, 481, 741, 1857},
        %extra x tick style={tick label style={rotate=0}},
        xlabel=$h$,
         ylabel=$E_\zeta$,x post scale =1%,y post scale =2.5
         ]         
         % \logLogSlopeTriangle{0.95}{0.4}{0.1}{1}{black};
         \addplot[cyan,mark=*, mark options={solid},] 
        table[x=h, y=NEZ] {chapters/dat_N/egraphC100ndt1000.dat};

        \addplot[magenta,thick,mark=*, mark options={solid},dashed] 
        table[x=h, y=LEZ] {chapters/dat_N/egraphC100ndt1000.dat};

             \addplot[teal,thick,mark=*, mark options={solid},dotted] 
         table[x=h, y=REZ] {chapters/dat_N/egraphC100ndt1000.dat};
    
         \addplot[cyan,mark=star, mark options={solid},] 
        table[x=h, y=NEZ] {chapters/dat_N/egraphC10ndt1000.dat};
       
        \addplot[magenta,thick,mark=star, mark options={solid},dashed] 
        table[x=h, y=LEZ] {chapters/dat_N/egraphC10ndt1000.dat};
       
             \addplot[teal,thick,mark=star, mark options={solid},dotted] 
         table[x=h, y=REZ] {chapters/dat_N/egraphC10ndt1000.dat};
          \addplot[cyan,mark=o, mark options={solid},] 
        table[x=h, y=NEZ] {chapters/dat_N/egraphC1ndt1000.dat};
       
        \addplot[magenta,thick,mark=o, mark options={solid},dashed] 
        table[x=h, y=LEZ] {chapters/dat_N/egraphC1ndt1000.dat};
       
             \addplot[teal,thick,mark=o, mark options={solid},dotted] 
         table[x=h, y=REZ] {chapters/dat_N/egraphC1ndt1000.dat};
   
               \end{loglogaxis}
\end{tikzpicture}
\subcaption{$\delta t=10^{-3}$}
\end{minipage}
\hskip 30pt
\begin{minipage}{0.45\textwidth}
        \begin{tikzpicture}[scale=0.75]
        \begin{loglogaxis}[name=egraph1,%legend columns=6,legend to name=egraphdt100, %tick align=outside,tick pos=lower,
        %xtick=\empty,
         %extra x ticks={37, 129, 481, 741, 1857},
         %extra x tick labels={37, 129, 481, 741, 1857},
        %extra x tick style={tick label style={rotate=0}},
        xlabel=$h$,
         ylabel=$E_\zeta$,x post scale =1%,y post scale =2.5
         ]         
         % \logLogSlopeTriangle{0.95}{0.4}{0.1}{1}{black};
         \addplot[cyan,mark=*, mark options={solid},] 
        table[x=h, y=NEZ] {chapters/dat_N/egraphC100ndt10000.dat};
       
        \addplot[magenta,thick,mark=*, mark options={solid},dashed] 
        table[x=h, y=LEZ] {chapters/dat_N/egraphC100ndt10000.dat};
        
             \addplot[teal,thick,mark=*, mark options={solid},dotted] 
         table[x=h, y=REZ] {chapters/dat_N/egraphC100ndt10000.dat};
        
         \addplot[cyan,mark=star, mark options={solid},] 
        table[x=h, y=NEZ] {chapters/dat_N/egraphC10ndt10000.dat};
       
        \addplot[magenta,thick,mark=star, mark options={solid},dashed] 
        table[x=h, y=LEZ] {chapters/dat_N/egraphC10ndt10000.dat};
       
             \addplot[teal,thick,mark=star, mark options={solid},dotted] 
         table[x=h, y=REZ] {chapters/dat_N/egraphC10ndt10000.dat};
          \addplot[cyan,mark=o, mark options={solid},] 
        table[x=h, y=NEZ] {chapters/dat_N/egraphC1ndt10000.dat};
       
        \addplot[magenta,thick,mark=o, mark options={solid},dashed] 
        table[x=h, y=LEZ] {chapters/dat_N/egraphC1ndt10000.dat};
       
             \addplot[teal,thick,mark=o, mark options={solid},mark options={solid},dotted] 
         table[x=h, y=REZ] {chapters/dat_N/egraphC1ndt10000.dat};
               \end{loglogaxis}
\end{tikzpicture}
\subcaption{$\delta t=10^{-4}$}
\end{minipage}
\caption{Comparison of $E_{\zeta}$ for different choices of $C_{tol}$ and $C_{\varepsilon}=1$}
\label{fig:Egraph_Ezeta}
 \end{figure}
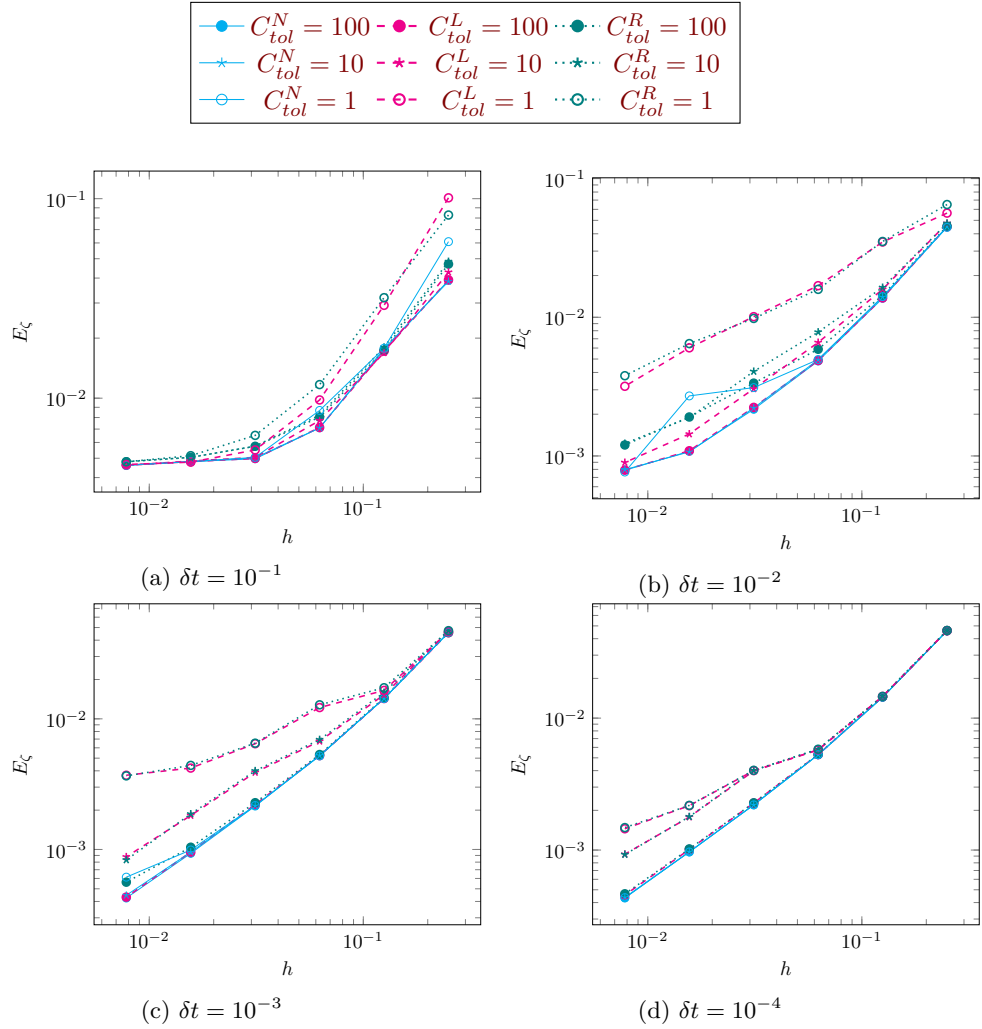
\begin{figure}\centering
% %\begin{minipage}{0.25\textwidth}
\centerline{ \ref{EHZ}}
%   %\end{minipage}
\vspace{0.50cm}
\begin{minipage}{0.45\textwidth}
\begin{tikzpicture}[scale=0.75]
% \pgfplotsset{
%     every loglogaxis legend/.append style={
%         at={(0.5,1.03)},
%         anchor=south
%     },
% }
        \begin{loglogaxis}[name=egraphEHZ4,legend columns=3,legend to name=EHZ, %tick align=outside,tick pos=lower,
        %xtick=\empty,
         %extra x ticks={37, 129, 481, 741, 1857},
         %extra x tick labels={37, 129, 481, 741, 1857},
        %extra x tick style={tick label style={rotate=0}},
        xlabel=$h$,
         ylabel=$E_{\nabla\zeta}$,x post scale =1%,y post scale =2.5
         ]         
         % \logLogSlopeTriangle{0.95}{0.4}{0.1}{1}{black};
         \addplot[cyan,mark=*, mark options={solid},] 
        table[x=h, y=NEH] {chapters/dat_N/egraphC100ndt10.dat};
    \addlegendentry{$C_{tol}^N=100$}
        \addplot[magenta,thick,mark=*, mark options={solid},dashed] 
        table[x=h, y=LEH] {chapters/dat_N/egraphC100ndt10.dat};
           \addlegendentry{$C_{tol}^L=100$}
             \addplot[teal,thick,mark=*, mark options={solid},dotted] 
         table[x=h, y=REH] {chapters/dat_N/egraphC100ndt10.dat};
         \addlegendentry{$C_{tol}^R=100$}
         \addplot[cyan,mark=star, mark options={solid}] 
        table[x=h, y=NEH] {chapters/dat_N/egraphC10ndt10.dat};
         \addlegendentry{$C_{tol}^N=10$}
        \addplot[magenta,thick,mark=star, mark options={solid},dashed] 
        table[x=h, y=LEH] {chapters/dat_N/egraphC10ndt10.dat};
         \addlegendentry{$C_{tol}^L=10$}
             \addplot[teal,thick,mark=star, mark options={solid},dotted] 
         table[x=h, y=REH] {chapters/dat_N/egraphC10ndt10.dat};
           \addlegendentry{$C_{tol}^R=10$}
          \addplot[cyan,mark=o, mark options={solid},] 
        table[x=h, y=NEH] {chapters/dat_N/egraphC1ndt10.dat};
         \addlegendentry{$C_{tol}^N=1$}
        \addplot[magenta,thick,mark=o, mark options={solid},dashed] 
        table[x=h, y=LEH] {chapters/dat_N/egraphC1ndt10.dat};
         \addlegendentry{$C_{tol}^L=1$}
             \addplot[teal,thick,mark=o, mark options={solid},dotted] 
         table[x=h, y=REH] {chapters/dat_N/egraphC1ndt10.dat};
    \addlegendentry{$C_{tol}^R=1$}
               \end{loglogaxis}
\end{tikzpicture}
\subcaption{$\delta t=10^{-1}$}
\end{minipage}
\hskip 30pt
\begin{minipage}{0.45\textwidth}
\begin{tikzpicture}[scale=0.75]
        \begin{loglogaxis}[name=egraphEHZ3,%legend columns=6,legend to name=egraphdt100, %tick align=outside,tick pos=lower,
        %xtick=\empty,
         %extra x ticks={37, 129, 481, 741, 1857},
         %extra x tick labels={37, 129, 481, 741, 1857},
        %extra x tick style={tick label style={rotate=0}},
        xlabel=$h$,
         ylabel=$E_{\nabla\zeta}$,x post scale =1]%,y post scale =2.5
         % \logLogSlopeTriangle{0.95}{0.4}{0.1}{1}{black};
        \addplot[cyan,mark=*, mark options={solid},] 
        table[x=h, y=NEH] {chapters/dat_N/egraphC100ndt100.dat};
        
        \addplot[magenta,thick,mark=*, mark options={solid},dashed] 
        table[x=h, y=LEH] {chapters/dat_N/egraphC100ndt100.dat};
        
         \addplot[teal,thick,mark=*, mark options={solid},dotted] 
         table[x=h, y=REH] {chapters/dat_N/egraphC100ndt100.dat};
       
        \addplot[cyan,mark=star, mark options={solid},] 
        table[x=h, y=NEH] {chapters/dat_N/egraphC10ndt100.dat};
     
        \addplot[magenta,thick,mark=star, mark options={solid},dashed] 
        table[x=h, y=LEH] {chapters/dat_N/egraphC10ndt100.dat};
       
         \addplot[teal,thick,mark=star, mark options={solid},dotted] 
         table[x=h, y=REH] {chapters/dat_N/egraphC10ndt100.dat};
        
         \addplot[cyan,mark=o, mark options={solid},] 
        table[x=h, y=NEH] {chapters/dat_N/egraphC1ndt100.dat};
   
        \addplot[magenta,thick,mark=o, mark options={solid},dashed] 
        table[x=h, y=LEH] {chapters/dat_N/egraphC1ndt100.dat};
  
         \addplot[teal,thick,mark=o, mark options={solid},dotted] 
         table[x=h, y=REH] {chapters/dat_N/egraphC1ndt100.dat};

               \end{loglogaxis}
\end{tikzpicture}
\subcaption{$\delta t=10^{-2}$}
\end{minipage}
\begin{minipage}{0.45\textwidth}
\begin{tikzpicture}[scale=0.75]
        \begin{loglogaxis}[name=egraphEHZ2,%legend columns=6,legend to name=egraphdt100, %tick align=outside,tick pos=lower,
        %xtick=\empty,
         %extra x ticks={37, 129, 481, 741, 1857},
         %extra x tick labels={37, 129, 481, 741, 1857},
        %extra x tick style={tick label style={rotate=0}},
        xlabel=$h$,
         ylabel=$E_{\nabla\zeta}$,x post scale =1%,y post scale =2.5
         ]         
         % \logLogSlopeTriangle{0.95}{0.4}{0.1}{1}{black};
         \addplot[cyan,mark=*, mark options={solid},] 
        table[x=h, y=NEH] {chapters/dat_N/egraphC100ndt1000.dat};

        \addplot[magenta,thick,mark=*, mark options={solid},dashed] 
        table[x=h, y=LEH] {chapters/dat_N/egraphC100ndt1000.dat};

             \addplot[teal,thick,mark=*, mark options={solid},dotted] 
         table[x=h, y=REH] {chapters/dat_N/egraphC100ndt1000.dat};
    
         \addplot[cyan,mark=star, mark options={solid},] 
        table[x=h, y=NEH] {chapters/dat_N/egraphC10ndt1000.dat};
       
        \addplot[magenta,thick,mark=star, mark options={solid},dashed] 
        table[x=h, y=LEH] {chapters/dat_N/egraphC10ndt1000.dat};
       
             \addplot[teal,thick,mark=star, mark options={solid},dotted] 
         table[x=h, y=REH] {chapters/dat_N/egraphC10ndt1000.dat};
          \addplot[cyan,mark=o, mark options={solid},] 
        table[x=h, y=NEH] {chapters/dat_N/egraphC1ndt1000.dat};
       
        \addplot[magenta,thick,mark=o, mark options={solid},dashed] 
        table[x=h, y=LEH] {chapters/dat_N/egraphC1ndt1000.dat};
       
             \addplot[teal,thick,mark=o, mark options={solid},dotted] 
         table[x=h, y=REH] {chapters/dat_N/egraphC1ndt1000.dat};
   
               \end{loglogaxis}
\end{tikzpicture}
\subcaption{$\delta t=10^{-3}$}
\end{minipage}
\hskip 30pt
\begin{minipage}{0.45\textwidth}
\begin{tikzpicture}[scale=0.75]
        \begin{loglogaxis}[name=egraphEHZ1,%legend columns=6,legend to name=egraphdt100, %tick align=outside,tick pos=lower,
        %xtick=\empty,
         %extra x ticks={37, 129, 481, 741, 1857},
         %extra x tick labels={37, 129, 481, 741, 1857},
        %extra x tick style={tick label style={rotate=0}},
        xlabel=$h$,
         ylabel=$E_{\nabla\zeta}$,x post scale =1%,y post scale =2.5
         ]         
         % \logLogSlopeTriangle{0.95}{0.4}{0.1}{1}{black};
         \addplot[cyan,mark=*, mark options={solid},] 
        table[x=h, y=NEH] {chapters/dat_N/egraphC100ndt10000.dat};
       
        \addplot[magenta,thick,mark=*, mark options={solid},dashed] 
        table[x=h, y=LEH] {chapters/dat_N/egraphC100ndt10000.dat};
        
             \addplot[teal,thick,mark=*, mark options={solid},dotted] 
         table[x=h, y=REH] {chapters/dat_N/egraphC100ndt10000.dat};
        
         \addplot[cyan,mark=star, mark options={solid},] 
        table[x=h, y=NEH] {chapters/dat_N/egraphC10ndt10000.dat};
       
        \addplot[magenta,thick,mark=star, mark options={solid},dashed] 
        table[x=h, y=LEH] {chapters/dat_N/egraphC10ndt10000.dat};
       
             \addplot[teal,thick,mark=star, mark options={solid},dotted] 
         table[x=h, y=REH] {chapters/dat_N/egraphC10ndt10000.dat};
          \addplot[cyan,mark=o, mark options={solid},] 
        table[x=h, y=NEH] {chapters/dat_N/egraphC1ndt10000.dat};
       
        \addplot[magenta,thick,mark=o, mark options={solid},dashed] 
        table[x=h, y=LEH] {chapters/dat_N/egraphC1ndt10000.dat};
       
             \addplot[teal,thick,mark=o, mark options={solid},mark options={solid},dotted] 
         table[x=h, y=REH] {chapters/dat_N/egraphC1ndt10000.dat};
   
               \end{loglogaxis}
\end{tikzpicture}
\subcaption{$\delta t=10^{-4}$}
\end{minipage}
\caption{Comparison of $E_{\nabla\zeta}$ for different choices of $C_{tol}$ and $C_{\varepsilon}=1$ }
\label{fig:Egraph_Enablazeta}
 \end{figure}

The above-mentioned figures can be recast to examine the convergence of the error norms. Figures \ref{fig:Egraph_Ezeta} and \ref{fig:Egraph_Enablazeta} illustrate the convergence of $E_\zeta$ and $E_{\nabla\zeta}$ for fixed time steps as the mesh size $h$ is refined across various choices of $C_{\text{tol}}$ with $C_{\varepsilon} = 1$. In the first figure, the results show consistent convergence of $E_\zeta$ for all methods, with similar values across the range of tolerances. This suggests choosing $C_{\text{tol}}=1$ since the error is satisfactorily with slow convergence rate as expected. The second figure, which focuses on the error $E_{\nabla\zeta}$, also shows similar behaviour across all solvers when $C_{\text{tol}} = 100$. However, the linearised (L) and regularised (R) methods struggle to maintain convergence for smaller values of $C_{\text{tol}}$, particularly for $\delta t = 10^{-3}$, where they fail to show convergence on finer meshes. Moreover, the Newton method also begins to diverge, which is visible in the case of $\delta t = \{10^{-2},10^{-3}\}$, indicating that even the Newton method has limitations under certain conditions. This is not a surprise in approximating the gradient where a higher tolerance is required. In fact, the convergence of the L scheme is proved in Lemma \ref{lem:conv_unreg} only in the weak norm $\| \cdot \|_{*,\mc{D}}$, thus not for the gradient. For the regularised scheme in Section \ref{sec:reg}, the contraction is proved in the $\| \cdot \|_{X}$ norm, where the gradient component is weighted by $\delta t^{1/2}$, which explains the limited effect of the iterations on the gradient component of the error. These observations suggest that for ensuring convergence in gradient errors, a stricter tolerance ($C_{\text{tol}} \geq 100$) may be necessary, particularly when working with finer meshes and small time steps. The divergence observed in $E_{\nabla\zeta}$ for smaller values of $C_{\text{tol}}$ may be linked to the approximation of $\nabla\zeta$ with $L$ to obtain a fixed left-hand side for the linearised system.

In light of the previous experiments, we now examine the computational efficiency, beginning with the deterministic case and then moving on to the stochastic case. Figures \ref{fig:dt100e1}, \ref{fig:dt10} and \ref{fig:dt1} shows a comparison of cumulative CPU time for N, L and R methods for various values of $C_{\text{tol}}$ in decreasing order with fixed $C_{\varepsilon}=1$. The time is calculated by taking the minimum of ten repetitions of each simulation. On the x-axis, "\textsc{Sn}" represents a simulation number which start with the coarsest mesh (M1) simulating for each $N_{\delta t}=\{1,10,10^2,10^3,10^4\}$ and goes up to the finest mesh (M6). In all of these graphs, L and R methods are outperforming N  (as can be seen in the accompanied subplots) up to mesh M5. However, Newton has a significant edge in M6 specifically for $C_{\text{tol}}=100$ whereas all the methods taking almost same CPU time when $C_{\text{tol}}=10$. The choice $C_{\text{tol}}=1$ goes in the favour of L and R methods, which suits best in computing $E_\zeta$ as discussed above.
% %TO BE UNCOMMENTED
\begin{figure}\centering
%\begin{minipage}{0.25\textwidth}
\centerline{\ref{Dt100e102}}
%\end{minipage}
%\begin{minipage}{0.3\textwidth}
\vspace{0.2cm}
\begin{tikzpicture}[scale=0.85]
% \pgfplotsset{
%     every axis legend/.append style={
%         at={(0.5,1.03)},
%         anchor=south
%     },
% }
        \begin{axis}[name=firstfigT100E10,legend columns=3,legend to name=Dt100e102, %tick align=outside,tick pos=lower,
        %xtick=\empty,
         %extra x ticks={37, 129, 481, 741, 1857},
         %extra x tick labels={37, 129, 481, 741, 1857},
        %extra x tick style={tick label style={rotate=0}},
        xlabel=\textsc{Sn},
         ylabel=\textsc{CPU time},x post scale =2.5%,y post scale =2.5
         ]         
         % \logLogSlopeTriangle{0.95}{0.4}{0.1}{1}{black};
        \addplot[cyan,no markers] 
        table[x=sr, y=NCC] {chapters/dat_N/D_compt100.dat};
        \addlegendentry{N}
        \addplot[magenta,thick,no markers,dashed] 
        table[x=sr, y=LCC] {chapters/dat_N/D_compt100.dat};
         \addlegendentry{L}
         \addplot[teal,thick,no markers,dotted] 
         table[x=sr, y=RCC] {chapters/dat_N/D_compt100.dat};
        \addlegendentry{R}
          \end{axis}
\node[pin=92:{%
    \begin{tikzpicture}[scale=1.2]
    %[trim axis left,trim axis right]
    \begin{axis}[tiny,
    domain=1:7,
      xmin=2,xmax=6,
      ymin=0,ymax=10,
      line join=round,
      enlargelimits,width = 4cm,
      tick align=outside
      ,skip coords between index={8}{24}%
    ]
      \addplot[cyan,no markers] 
        table[x=sr, y=NCC] {chapters/dat_N/D_compt100.dat};
       % \addlegendentry{N}
        \addplot[magenta,thick,no markers,dashed] 
        table[x=sr, y=LCC] {chapters/dat_N/D_compt100.dat};
        % \addlegendentry{L}
         \addplot[teal,thick,no markers,dotted] 
         table[x=sr, y=RCC] {chapters/dat_N/D_compt100.dat};
       % \addlegendentry{R}
    \end{axis}
    \end{tikzpicture}%
}] at (3,0.5) {};
    \node[pin=90:{%
    \begin{tikzpicture}[scale=1.2]
    %[trim axis left,trim axis right]
    \begin{axis}[tiny,
    domain=8:15,
      xmin=9,xmax=14,
      ymin=18,ymax=120,
      line join=round,
      enlargelimits,width = 4cm,
      tick align=outside
      ,skip coords between index={16}{24}%
    ]
      \addplot[cyan,no markers] 
        table[x=sr, y=NCC] {chapters/dat_N/D_compt100.dat};
       % \addlegendentry{N}
        \addplot[magenta,thick,no markers,dashed] 
        table[x=sr, y=LCC] {chapters/dat_N/D_compt100.dat};
        % \addlegendentry{L}
         \addplot[teal,thick,no markers,dotted] 
         table[x=sr, y=RCC] {chapters/dat_N/D_compt100.dat};
       % \addlegendentry{R}
    \end{axis}
    \end{tikzpicture}%
}] at (9,0.5) {};
    \end{tikzpicture}
    \caption{Deterministic case: cumulative frequency graph of CPU times vs.\ time step for $C_{tol}=100$ and $C_{\varepsilon}=1$}
    \label{fig:dt100e1}
 \end{figure}%
\begin{figure}\centering
%\begin{minipage}{0.25\textwidth}
\centerline{ \ref{t10e11}}
%\end{minipage}
%\begin{minipage}{0.3\textwidth}
\vspace{0.50cm}
\begin{tikzpicture}[scale=0.85]
% \pgfplotsset{
%     every axis legend/.append style={
%         at={(0.5,1.03)},
%         anchor=south
%     },
% }
        \begin{axis}[name=firstfigt10e1,legend columns=3,legend to name=t10e11, tick align=outside,tick pos=lower,
        %xtick=\empty,
         %extra x ticks={37, 129, 481, 741, 1857},
         %extra x tick labels={37, 129, 481, 741, 1857},
        %extra x tick style={tick label style={rotate=0}},
        xlabel=\textsc{Sn},
         ylabel=\textsc{CPU time},x post scale =2.5%,y post scale =2.5
         ]
         
         % \logLogSlopeTriangle{0.95}{0.4}{0.1}{1}{black};
        \addplot[cyan,no markers] 
        table[x=sr, y=NCC] {chapters/dat_N/D_compt10.dat};
        \addlegendentry{N}
        \addplot[magenta,thick,no markers,dashed] 
        table[x=sr, y=LCC] {chapters/dat_N/D_compt10.dat};
         \addlegendentry{L}
         \addplot[teal,thick,no markers,dotted] 
         table[x=sr, y=RCC] {chapters/dat_N/D_compt10.dat};
        \addlegendentry{R}
          \end{axis}
    \node[pin=90:{%
    \begin{tikzpicture}[scale=1.2]
    %[trim axis left,trim axis right]
    \begin{axis}[tiny,
     domain=1:6,
      xmin=2,xmax=5,
      ymin=0,ymax=6,
      line join=round,
      enlargelimits,width = 4cm,
      tick align=outside
      ,skip coords between index={8}{24}%
    ]
      \addplot[cyan,no markers] 
        table[x=sr, y=NCC] {chapters/dat_N/D_compt10.dat};
       % \addlegendentry{N}
        \addplot[magenta,thick,no markers,dashed] 
        table[x=sr, y=LCC] {chapters/dat_N/D_compt10.dat};
        % \addlegendentry{L}
         \addplot[teal,thick,no markers,dotted] 
         table[x=sr, y=RCC] {chapters/dat_N/D_compt10.dat};
       % \addlegendentry{R}
    \end{axis}
    \end{tikzpicture}%
}] at (3,0.2) {};
    \node[pin=15:{%
    \begin{tikzpicture}[scale=1.2]
    %[trim axis left,trim axis right]
    \begin{axis}[tiny,
    domain=6:12,
      xmin=7,xmax=11,
      ymin=10,ymax=50,
      line join=round,
      enlargelimits,width = 4cm,
      tick align=outside
      ,skip coords between index={13}{24}%
    ]
      \addplot[cyan,no markers] 
        table[x=sr, y=NCC] {chapters/dat_N/D_compt10.dat};
       % \addlegendentry{N}
        \addplot[magenta,thick,no markers,dashed] 
        table[x=sr, y=LCC] {chapters/dat_N/D_compt10.dat};
        % \addlegendentry{L}
         \addplot[teal,thick,no markers,dotted] 
         table[x=sr, y=RCC] {chapters/dat_N/D_compt10.dat};
       % \addlegendentry{R}
    \end{axis}
    \end{tikzpicture}%
}] at (6,0.5) {};
    \end{tikzpicture}
    \caption{Deterministic case: cumulative frequency graph of CPU times vs.\ time step for $C_{tol}=10$ and $C_{\varepsilon}=1$}
    \label{fig:dt10}
 \end{figure}%
\begin{figure}\centering
%\begin{minipage}{0.25\textwidth}
\centerline{ \ref{Dt100e101}}
%\end{minipage}
%\begin{minipage}{0.3\textwidth}
\vspace{0.50cm}
\begin{tikzpicture}[scale=0.85]
% \pgfplotsset{
%     every axis legend/.append style={
%         at={(0.5,1.03)},
%         anchor=south
%     },
% }
        \begin{axis}[name=firstfigT100E10,legend columns=3,legend to name=Dt100e101, %tick align=outside,tick pos=lower,
        %xtick=\empty,
         %extra x ticks={37, 129, 481, 741, 1857},
         %extra x tick labels={37, 129, 481, 741, 1857},
        %extra x tick style={tick label style={rotate=0}},
        xlabel=\textsc{Sn},
         ylabel=\textsc{CPU time},x post scale =2.5%,y post scale =2.5
         ]         
         % \logLogSlopeTriangle{0.95}{0.4}{0.1}{1}{black};
        \addplot[cyan,no markers] 
        table[x=sr, y=NCC] {chapters/dat_N/D_compt1.dat};
        \addlegendentry{N}
        \addplot[magenta,thick,no markers,dashed] 
        table[x=sr, y=LCC] {chapters/dat_N/D_compt1.dat};
         \addlegendentry{L}
         \addplot[teal,thick,no markers,dotted] 
         table[x=sr, y=RCC] {chapters/dat_N/D_compt1.dat};
        \addlegendentry{R}
          \end{axis}
\node[pin=92:{%
    \begin{tikzpicture}[scale=1.2]
    %[trim axis left,trim axis right]
    \begin{axis}[tiny,
    domain=2:9,
      xmin=3,xmax=8,
      ymin=0,ymax=30,
      line join=round,
      enlargelimits,width = 4cm,
      tick align=outside
      ,skip coords between index={10}{24}%
    ]
      \addplot[cyan,no markers] 
        table[x=sr, y=NCC] {chapters/dat_N/D_compt1.dat};
       % \addlegendentry{N}
        \addplot[magenta,thick,no markers,dashed] 
        table[x=sr, y=LCC] {chapters/dat_N/D_compt1.dat};
        % \addlegendentry{L}
         \addplot[teal,thick,no markers,dotted] 
         table[x=sr, y=RCC] {chapters/dat_N/D_compt1.dat};
       % \addlegendentry{R}
    \end{axis}
    \end{tikzpicture}%
}] at (3.5,0.5) {};
    \node[pin=90:{%
    \begin{tikzpicture}[scale=1.2]
    %[trim axis left,trim axis right]
    \begin{axis}[tiny,
    domain=8:15,
      xmin=9,xmax=14,
      ymin=18,ymax=120,
      line join=round,
      enlargelimits,width = 4cm,
      tick align=outside
      ,skip coords between index={16}{24}%
    ]
      \addplot[cyan,no markers] 
        table[x=sr, y=NCC] {chapters/dat_N/D_compt1.dat};
       % \addlegendentry{N}
        \addplot[magenta,thick,no markers,dashed] 
        table[x=sr, y=LCC] {chapters/dat_N/D_compt1.dat};
        % \addlegendentry{L}
         \addplot[teal,thick,no markers,dotted] 
         table[x=sr, y=RCC] {chapters/dat_N/D_compt1.dat};
       % \addlegendentry{R}
    \end{axis}
    \end{tikzpicture}%
}] at (9,0.5) {};
    \end{tikzpicture}
    \caption{Deterministic case: cumulative frequency graph of CPU times vs.\ time step for $C_{tol}=1$ and $C_{\varepsilon}=1$}
    \label{fig:dt1}
 \end{figure}%

\subsection{The stochastic case}

We now address the stochastic case of the Stefan problem. For a detailed implementation, refer to \cite{DRONIOU2024114}. Numerically, the stochastic PDE is solved by simulating the equation over a finite set of Brownian motion samples. The expectation of the resulting solutions provides an approximate solution to the stochastic problem. The first experiment examines how CPU time changes for each method as the number of Brownian motions (nbm) increases. Table~\ref{tab:BM_scaling} presents the CPU time (in seconds) required for methods N, L, and R across different numbers of time steps ($\delta t$) and varying numbers of Brownian motions (BM) on a fixed mesh (\texttt{M4}). The results indicate that the CPU time for each method scales linearly with the number of Brownian motions. For example, when comparing the CPU time for BM = 1 with BM = 10, the time increases by a factor of approximately 10, and similarly, when BM = 100, the time increases by nearly a factor of 100. This shows that the time difference between any two methods (for any fix $h$ and $\delta t$) will also increase by this scale (roughly).%
\begin{table}
\begin{tabular}{|l|lll|lll|lll|}
\hline
    M4   & \multicolumn{3}{c|}{$N_{\delta t}$=10}                                      & \multicolumn{3}{c|}{$N_{\delta t}$=100}                                       & \multicolumn{3}{c|}{$N_{\delta t}$=1000}                                         \\ \hline
BM     & \multicolumn{1}{l|}{N}     & \multicolumn{1}{l|}{L}      & R     & \multicolumn{1}{l|}{N}      & \multicolumn{1}{l|}{L}      & R      & \multicolumn{1}{l|}{N}       & \multicolumn{1}{l|}{L}       & R       \\ \hline
1   & \multicolumn{1}{l|}{0.9}  & \multicolumn{1}{l|}{1.2}   & 0.5  & \multicolumn{1}{l|}{3.5}   & \multicolumn{1}{l|}{4.3}   & 3.6   & \multicolumn{1}{l|}{22.9}   & \multicolumn{1}{l|}{20.03}   & 19.0   \\ \hline
10  & \multicolumn{1}{l|}{7.0}  & \multicolumn{1}{l|}{10.1}  & 4.4  & \multicolumn{1}{l|}{29.7}  & \multicolumn{1}{l|}{38.6}  & 32.3  & \multicolumn{1}{l|}{205.9}  & \multicolumn{1}{l|}{185.7}  & 178.2  \\ \hline
100 & \multicolumn{1}{l|}{77.99} & \multicolumn{1}{l|}{113.01} & 47.31 & \multicolumn{1}{l|}{310.59} & \multicolumn{1}{l|}{408} & 335.1 & \multicolumn{1}{l|}{2036.8} & \multicolumn{1}{l|}{1783.4} & 1701.4 \\ \hline
\end{tabular}
\caption{Comparison of CPU time across different Brownian motions for methods N, L, and R.}
\label{tab:BM_scaling}
\end{table}%

Similarly to the deterministic case, we conducted simulations for the stochastic scenario using the same set of parameters. In this case, each simulation (for a fixed $\delta t$ and $h$) was run for 10 random Brownian motions, resulting in a non-zero source term, unlike in the deterministic case. Each set of simulations was repeated 10 times to obtain the minimum CPU time for each method. The cumulative frequency of CPU times for each method is depicted in Figures \ref{fig:S_CPU_t100}, \ref{fig:S_CPU_t10} and \ref{fig:S_CPU_t1}. %
% %TO BE UNCOMMENTED
\begin{figure}\centering
%\begin{minipage}{0.25\textwidth}
\centerline{ \ref{t100e10c}}
%\end{minipage}
%\begin{minipage}{0.3\textwidth}
\vspace{0.25cm}
\begin{tikzpicture}[scale=0.85]
% \pgfplotsset{
%     every axis legend/.append style={
%         at={(0.5,1.03)},
%         anchor=south
%     },
% }
        \begin{axis}[name=firstfig,legend columns=3,legend to name=t100e10c, %tick align=outside,tick pos=lower,
        %xtick=\empty,
         %extra x ticks={37, 129, 481, 741, 1857},
         %extra x tick labels={37, 129, 481, 741, 1857},
        %extra x tick style={tick label style={rotate=0}},
        xlabel=\textsc{Sn},
         ylabel=\textsc{CPU time},x post scale =2.5%,y post scale =2.5
         %,skip coords between index={21}{24}
         ]         
         % \logLogSlopeTriangle{0.95}{0.4}{0.1}{1}{black};
        \addplot[cyan,no markers] 
        table[x=sr, y=NCC] {chapters/dat_N/S_CPU_t100.dat};
        \addlegendentry{N}
        \addplot[magenta,thick,no markers,dashed] 
        table[x=sr, y=LCC] {chapters/dat_N/S_CPU_t100.dat};
         \addlegendentry{L}
         \addplot[teal,thick,no markers,dotted] 
         table[x=sr, y=RCC] {chapters/dat_N/S_CPU_t100.dat};
        \addlegendentry{R}
          \end{axis}
    \node[pin=106:{%
    \begin{tikzpicture}[scale=1.2]
    %[trim axis left,trim axis right]
    \begin{axis}[tiny,
    xtick=\empty,
    extra x ticks={4,5,6,7},
    extra x tick labels={4,5,6,7},
    domain=3:8,
      xmin=4,xmax=7,
      ymin=20,ymax=60,
      line join=round,
      enlargelimits,width = 4cm,
      tick align=outside
      ,x post scale =1
      ,skip coords between index={8}{24}%
    ]
      \addplot[cyan,no markers] 
        table[x=sr, y=NCC] {chapters/dat_N/S_CPU_t100.dat};
       % \addlegendentry{N}
        \addplot[magenta,thick,no markers,dashed] 
        table[x=sr, y=LCC] {chapters/dat_N/S_CPU_t100.dat};
        % \addlegendentry{L}
         \addplot[teal,thick,no markers,dotted] 
         table[x=sr, y=RCC] {chapters/dat_N/S_CPU_t100.dat};
       % \addlegendentry{R}
    \end{axis}
    \end{tikzpicture}%
}] at (5.4,0.3) {};
 \node[pin={[pin distance=0.4cm]88:{%
    \begin{tikzpicture}[scale=1.2]
    %[trim axis left,trim axis right]%
    \begin{axis}[tiny,%
    % xtick=\empty,
    %extra x ticks={17, 17.5, 18, 18.5},
   % extra x tick labels={17, 17.5, 18, 18.5},
     domain=10:15,
     xmin=11,xmax=14,%
      ymin=150,ymax=1000,%
      line join=round,%
      enlargelimits,width = 4cm,%
      tick align=outside%
      ,x post scale =1.6%
       ,skip coords between index={16}{24}%
    ]%
      \addplot[cyan,no markers] %
        table[x=sr, y=NCC] {chapters/dat_N/S_CPU_t1.dat};%
       % \addlegendentry{N}
        \addplot[magenta,thick,no markers,dashed]%
        table[x=sr, y=LCC] {chapters/dat_N/S_CPU_t1.dat};%
        % \addlegendentry{L}%
         \addplot[teal,thick,no markers,dotted]% 
         table[x=sr, y=RCC] {chapters/dat_N/S_CPU_t1.dat};%
       % \addlegendentry{R}
    \end{axis}
    \end{tikzpicture}%
}}] at (9.6,0.3) {};
%   \node[pin={[pin distance=0.4cm]93:{%
%     \begin{tikzpicture}[trim axis left,trim axis right]%
%     \begin{axis}[tiny,%
%      xtick=\empty,
%    % extra x ticks={17, 17.5, 18, 18.5},
%    % extra x tick labels={17, 17.5, 18, 18.5},
%      xmin=17,xmax=18.5,%
%       ymin=2200,ymax=4000,%
%       line join=round,%
%       enlargelimits,width = 4cm,%
%       tick align=outside%
%       ,x post scale =1.6%
%     ]%
%       \addplot[cyan,no markers] %
%         table[x=sr, y=NCC] {chapters/dat_N/S_CPU_t100.dat};%
%        % \addlegendentry{N}
%         \addplot[magenta,thick,no markers,dashed]%
%         table[x=sr, y=LCC] {chapters/dat_N/S_CPU_t100.dat};%
%         % \addlegendentry{L}%
%          \addplot[teal,thick,no markers,dotted]% 
%          table[x=sr, y=RCC] {chapters/dat_N/S_CPU_t100.dat};%
%        % \addlegendentry{R}
%     \end{axis}
%     \end{tikzpicture}%
% }}] at (12.2,0.65) {};
    \end{tikzpicture}
    \caption{Stochastic case with 10 Brownian motions: cumulative frequency graph of CPU times vs.\ time step for $C_{tol}=100$ and $C_{\varepsilon}=1$}
    \label{fig:S_CPU_t100}
 \end{figure}%
\begin{figure}\centering
%\begin{minipage}{0.25\textwidth}
\centerline{ \ref{S_CPU_t10t100e10b}}
%\end{minipage}
%\begin{minipage}{0.3\textwidth}
\vspace{0.25cm}
\begin{tikzpicture}[scale=0.85]
% \pgfplotsset{
%     every axis legend/.append style={
%         at={(0.5,1.03)},
%         anchor=south
%     },
% }
\begin{axis}[name=firstfigS_CPU_t10,legend columns=6,legend to name=S_CPU_t10t100e10b, tick align=outside,tick pos=lower,
        %xtick=\empty,
         %extra x ticks={37, 129, 481, 741, 1857},
         %extra x tick labels={37, 129, 481, 741, 1857},
        %extra x tick style={tick label style={rotate=0}},
        xlabel=\textsc{Sn},
         ylabel=\textsc{CPU time},x post scale =2.5%,y post scale =2.5
         ]
        %    \addplot[red,no markers] 
        % table[x=sr, y=NCC] {chapters/dat_N/st100e10.dat};
        % \addlegendentry{N\_T100E10}
        % \addplot[blue,thick,no markers,dashed] 
        % table[x=sr, y=LCC] {chapters/dat_N/st100e10.dat};
        %  \addlegendentry{L\_T100E10}
        %  \addplot[black,thick,no markers,dotted] 
        %  table[x=sr, y=RCC] {chapters/dat_N/st100e10.dat};
        % \addlegendentry{R\_T100E10}
         % \logLogSlopeTriangle{0.95}{0.4}{0.1}{1}{black};
        \addplot[cyan,no markers] 
        table[x=sr, y=NCC] {chapters/dat_N/S_CPU_t10.dat};%ST10e1
        \addlegendentry{N}
        \addplot[magenta,thick,no markers,dashed] 
        table[x=sr, y=LCC] {chapters/dat_N/S_CPU_t10.dat};
         \addlegendentry{L}
         \addplot[teal,thick,no markers,dotted] 
         table[x=sr, y=RCC] {chapters/dat_N/S_CPU_t10.dat};
        \addlegendentry{R}
          \end{axis}
    \node[pin=106:{%
    \begin{tikzpicture}[scale=1.2]
    %[trim axis left,trim axis right]
    \begin{axis}[tiny,
   % xtick=\empty,
    %extra x ticks={4,5,6,7},
    %extra x tick labels={4,5,6,7},
    domain=3:8,
      xmin=4,xmax=7,
      ymin=15,ymax=60,
      line join=round,
      enlargelimits,width = 4cm,
      tick align=outside
      ,x post scale =1
      ,skip coords between index={8}{24}%
    ]
      \addplot[cyan,no markers] 
        table[x=sr, y=NCC] {chapters/dat_N/S_CPU_t1.dat};
       % \addlegendentry{N}
        \addplot[magenta,thick,no markers,dashed] 
        table[x=sr, y=LCC] {chapters/dat_N/S_CPU_t1.dat};
        % \addlegendentry{L}
         \addplot[teal,thick,no markers,dotted] 
         table[x=sr, y=RCC] {chapters/dat_N/S_CPU_t1.dat};
       % \addlegendentry{R}
    \end{axis}
    \end{tikzpicture}%
}] at (5.4,0.3) {};
  \node[pin={[pin distance=0.4cm]88:{%
    \begin{tikzpicture}[scale=1.2]
    %[trim axis left,trim axis right]%
    \begin{axis}[tiny,%
    % xtick=\empty,
    %extra x ticks={17, 17.5, 18, 18.5},
   % extra x tick labels={17, 17.5, 18, 18.5},
   domain=10:15,
     xmin=11,xmax=14,%
      ymin=150,ymax=1000,%
      line join=round,%
      enlargelimits,width = 4cm,%
      tick align=outside%
      ,x post scale =1.6%
      ,skip coords between index={16}{24}%
    ]%
      \addplot[cyan,no markers] %
        table[x=sr, y=NCC] {chapters/dat_N/S_CPU_t1.dat};%
       % \addlegendentry{N}
        \addplot[magenta,thick,no markers,dashed]%
        table[x=sr, y=LCC] {chapters/dat_N/S_CPU_t1.dat};%
        % \addlegendentry{L}%
         \addplot[teal,thick,no markers,dotted]% 
         table[x=sr, y=RCC] {chapters/dat_N/S_CPU_t1.dat};%
       % \addlegendentry{R}
    \end{axis}
    \end{tikzpicture}%
}}] at (9.6,0.3) {};
%           \node[pin=90:{%
%     \begin{tikzpicture}[trim axis left,trim axis right]
%     \begin{axis}[tiny,
%       xmin=4,xmax=6,
%       ymin=0,ymax=80,
%       line join=round,
%       enlargelimits,width = 4cm,
%       tick align=outside
%       ,x post scale =2.
%     ]
%         %     \addplot[red,no markers] 
%         % table[x=sr, y=NCC] {chapters/dat_N/st100e10.dat};
      
%         % \addplot[blue,thick,no markers,dashed] 
%         % table[x=sr, y=LCC] {chapters/dat_N/st100e10.dat};
        
%         %  \addplot[black,thick,no markers,dotted] 
%         %  table[x=sr, y=RCC] {chapters/dat_N/st100e10.dat};
       
%       \addplot[cyan,no markers] 
%         table[x=sr, y=NCC] {chapters/dat_N/S_CPU_t10.dat};
%        % \addlegendentry{N}
%         \addplot[magenta,thick,no markers,dashed] 
%         table[x=sr, y=LCC] {chapters/dat_N/S_CPU_t10.dat};
%         % \addlegendentry{L}
%          \addplot[teal,thick,no markers,dotted] 
%          table[x=sr, y=RCC] {chapters/dat_N/S_CPU_t10.dat};
%        % \addlegendentry{R}
%     \end{axis}
%     \end{tikzpicture}%
% }] at (4,0.3) {};
    \end{tikzpicture}
    \caption{Stochastic case with 10 Brownian motions: Cumulative frequency graph of CPU times vs.\ time step for $C_{tol}=10$ and $C_{\varepsilon}=1$}
    \label{fig:S_CPU_t10}
 \end{figure}%
\begin{figure}\centering
%\begin{minipage}{0.25\textwidth}
  \centerline{ \ref{t100e10a}}
  %\end{minipage}
%\begin{minipage}{0.3\textwidth}
\vspace{0.25cm}
\begin{tikzpicture}[scale=0.85]
% \pgfplotsset{
%     every axis legend/.append style={
%         at={(0.5,1.03)},
%         anchor=south
%     },
% }
        \begin{axis}[name=firstfig,legend columns=3,legend to name=t100e10a, %tick align=outside,tick pos=lower,
        %xtick=\empty,
         %extra x ticks={37, 129, 481, 741, 1857},
         %extra x tick labels={37, 129, 481, 741, 1857},
        %extra x tick style={tick label style={rotate=0}},
        xlabel=\textsc{Sr},
         ylabel=\textsc{CPU time},x post scale =2.5%,y post scale =2.5
         %,skip coords between index={21}{24}
         ]         
         % \logLogSlopeTriangle{0.95}{0.4}{0.1}{1}{black};
        \addplot[cyan,no markers] 
        table[x=sr, y=NCC] {chapters/dat_N/S_CPU_t1.dat};
        \addlegendentry{N}
        \addplot[magenta,thick,no markers,dashed] 
        table[x=sr, y=LCC] {chapters/dat_N/S_CPU_t1.dat};
         \addlegendentry{L}
         \addplot[teal,thick,no markers,dotted] 
         table[x=sr, y=RCC] {chapters/dat_N/S_CPU_t1.dat};
        \addlegendentry{R}
          \end{axis}
    \node[pin=104:{%
    \begin{tikzpicture}[scale=1.2]
    %[trim axis left,trim axis right]
    \begin{axis}[tiny,
    xtick=\empty,
    extra x ticks={4,5,6,7},
    extra x tick labels={4,5,6,7},
    domain=3:8,
      xmin=4,xmax=7,
      ymin=20,ymax=60,
      line join=round,
      enlargelimits,width = 4cm,
      tick align=outside
      ,x post scale =1
      ,skip coords between index={8}{24}%
    ]
      \addplot[cyan,no markers] 
        table[x=sr, y=NCC] {chapters/dat_N/S_CPU_t1.dat};
       % \addlegendentry{N}
        \addplot[magenta,thick,no markers,dashed] 
        table[x=sr, y=LCC] {chapters/dat_N/S_CPU_t1.dat};
        % \addlegendentry{L}
         \addplot[teal,thick,no markers,dotted] 
         table[x=sr, y=RCC] {chapters/dat_N/S_CPU_t1.dat};
       % \addlegendentry{R}
    \end{axis}
    \end{tikzpicture}%
}] at (5.25,0.3) {};
  \node[pin={[pin distance=0.4cm]88:{%
    \begin{tikzpicture}[scale=1.2]
    %[trim axis left,trim axis right]%
    \begin{axis}[tiny,%
    % xtick=\empty,
    %extra x ticks={17, 17.5, 18, 18.5},
   % extra x tick labels={17, 17.5, 18, 18.5},
   domain=10:15,
     xmin=11,xmax=14,%
      ymin=200,ymax=1000,%
      line join=round,%
      enlargelimits,width = 4cm,%
      tick align=outside%
      ,x post scale =1.6%
      ,skip coords between index={16}{24}%
    ]%
      \addplot[cyan,no markers] %
        table[x=sr, y=NCC] {chapters/dat_N/S_CPU_t1.dat};%
       % \addlegendentry{N}
        \addplot[magenta,thick,no markers,dashed]%
        table[x=sr, y=LCC] {chapters/dat_N/S_CPU_t1.dat};%
        % \addlegendentry{L}%
         \addplot[teal,thick,no markers,dotted]% 
         table[x=sr, y=RCC] {chapters/dat_N/S_CPU_t1.dat};%
       % \addlegendentry{R}
    \end{axis}
    \end{tikzpicture}%
}}] at (9.55,0.3) {};
    \end{tikzpicture}
    \caption{Stochastic case with 10 Brownian motions: cumulative frequency graph of CPU times vs.\ time step for $C_{tol}=1$ and $C_{\varepsilon}=1$}
    \label{fig:S_CPU_t1}
 \end{figure}%
The overall behaviour is consistent with what was observed in the deterministic case; however, the CPU time difference is notably increased due to the number of Brownian motions. In Figure \ref{fig:S_CPU_t100}, L and R method show a substantial advantage over N except for the mesh M6 (which starts from \textsc{Sn}$=20$) where N outperforms them. It is important to note that in this extreme scenario, R performs better than L, highlighting the efficacy of the smaller $C_{\varepsilon}$ value chosen. As $C_{\text{tol}}$ decreases to 10 (which corresponds to an increase in tolerance), the L and R methods begin to converge in fewer iterations, resulting in improved efficiency. Since N is already saturated, its computational time remains relatively unaffected. This trend is even more pronounced in Figure \ref{fig:S_CPU_t1}, where $C_{\text{tol}}=1$. Here, L and R methods demonstrate significant improvements, with a reduction in CPU time (approximately 5.5 hours) compared to the N method.

Lastly, to test the performance in the absence of exact solution, we replicate “Test-1” from \cite[Section 6.1]{DRONIOU2024114} using N, L and R methods with $C_{\text{tol}}=\{1, 10, 100\}$ and $C_{\varepsilon}=1$. The methods are compared in Figure \ref{fig:Egraph_Ezeta} %
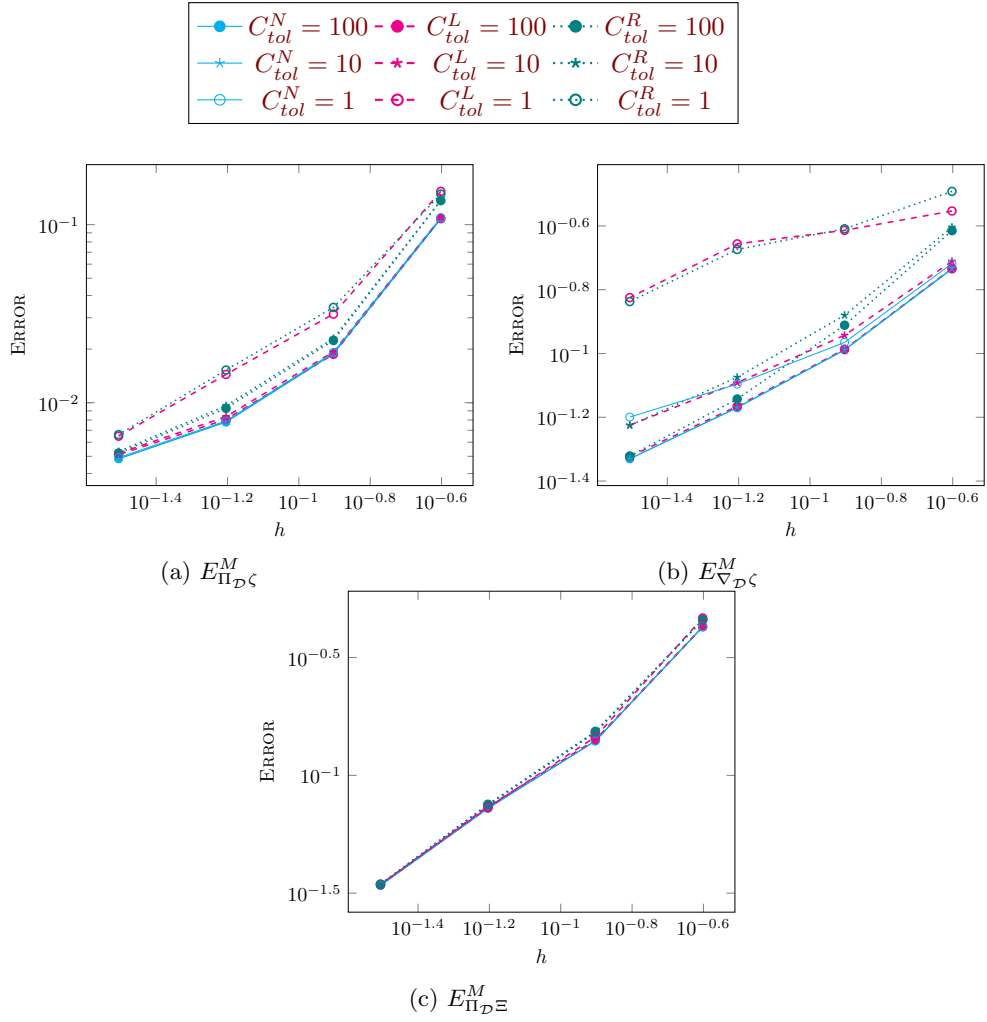
\begin{figure}\centering
  \centerline{ \ref{SEZ}}
  \vspace{0.50cm}
\begin{minipage}{0.45\textwidth}
\begin{tikzpicture}[scale=0.75]
% \pgfplotsset{
%     every axis legend/.append style={
%         at={(0.5,1.03)},
%         anchor=south
%     },
% }
\begin{loglogaxis}[name=egraphSEZ4,legend columns=3,legend to name=SEZ, %tick align=outside,tick pos=lower,
        %xtick=\empty,
         %extra x ticks={37, 129, 481, 741, 1857},
         %extra x tick labels={37, 129, 481, 741, 1857},
        %extra x tick style={tick label style={rotate=0}},
        xlabel=$h$,
         ylabel=\textsc{Error},x post scale =1%,y post scale =2.5
         ]         
         % \logLogSlopeTriangle{0.95}{0.4}{0.1}{1}{black};
         \addplot[cyan,mark=*, mark options={solid},] 
        table[x=h, y=ELZ] {chapters/dat_N/T100NME5bm25.dat};%ELZ L2 error norm of \zeta
    \addlegendentry{$C_{tol}^N=100$}
        \addplot[magenta,thick,mark=*, mark options={solid},dashed] 
        table[x=h, y=ELZ] {chapters/dat_N/T100LME5bm25.dat};
           \addlegendentry{$C_{tol}^L=100$}
             \addplot[teal,thick,mark=*, mark options={solid},dotted] 
         table[x=h, y=ELZ] {chapters/dat_N/T100RME5bm25.dat};
         \addlegendentry{$C_{tol}^R=100$}
         \addplot[cyan,mark=star, mark options={solid}] 
        table[x=h, y=ELZ] {chapters/dat_N/T10NME5bm25.dat};
         \addlegendentry{$C_{tol}^N=10$}
        \addplot[magenta,thick,mark=star, mark options={solid},dashed] 
        table[x=h, y=ELZ] {chapters/dat_N/T10LME5bm25.dat};
         \addlegendentry{$C_{tol}^L=10$}
             \addplot[teal,thick,mark=star, mark options={solid},dotted] 
         table[x=h, y=ELZ] {chapters/dat_N/T10RME5bm25.dat};
           \addlegendentry{$C_{tol}^R=10$}
          \addplot[cyan,mark=o, mark options={solid},] 
        table[x=h, y=ELZ] {chapters/dat_N/T1NME5bm25.dat};
         \addlegendentry{$C_{tol}^N=1$}
        \addplot[magenta,thick,mark=o, mark options={solid},dashed] 
        table[x=h, y=ELZ] {chapters/dat_N/T1LME5bm25.dat};
         \addlegendentry{$C_{tol}^L=1$}
             \addplot[teal,thick,mark=o, mark options={solid},dotted] 
         table[x=h, y=ELZ] {chapters/dat_N/T1RME5bm25.dat};
    \addlegendentry{$C_{tol}^R=1$}
               \end{loglogaxis}
\end{tikzpicture}
\subcaption{$E^M_{\Pi_\mathcal{D}\zeta}$}
\end{minipage}
 \hskip 30pt
\begin{minipage}{0.45\textwidth}
\begin{tikzpicture}[scale=0.75]
% \pgfplotsset{
%     every axis legend/.append style={
%         at={(0.5,1.03)},
%         anchor=south
%     },
% }
\begin{loglogaxis}[name=egraphSEZ5,%legend columns=3,legend to name=SEZ, %tick align=outside,tick pos=lower,
        %xtick=\empty,
         %extra x ticks={37, 129, 481, 741, 1857},
         %extra x tick labels={37, 129, 481, 741, 1857},
        %extra x tick style={tick label style={rotate=0}},
        xlabel=$h$,
         ylabel=\textsc{Error},x post scale =1%,y post scale =2.5
         ]         
         % \logLogSlopeTriangle{0.95}{0.4}{0.1}{1}{black};
         \addplot[cyan,mark=*, mark options={solid},] 
        table[x=h, y=EHZ] {chapters/dat_N/T100NME5bm25.dat};%EHZ L2 error norm of \nabla\zeta
    %\addlegendentry{$C_{tol}^N=100$}
        \addplot[magenta,thick,mark=*, mark options={solid},dashed] 
        table[x=h, y=EHZ] {chapters/dat_N/T100LME5bm25.dat};
        %   \addlegendentry{$C_{tol}^L=100$}
             \addplot[teal,thick,mark=*, mark options={solid},dotted] 
         table[x=h, y=EHZ] {chapters/dat_N/T100RME5bm25.dat};
        % \addlegendentry{$C_{tol}^R=100$}
         \addplot[cyan,mark=star, mark options={solid}] 
        table[x=h, y=EHZ] {chapters/dat_N/T10NME5bm25.dat};
       %  \addlegendentry{$C_{tol}^N=10$}
        \addplot[magenta,thick,mark=star, mark options={solid},dashed] 
        table[x=h, y=EHZ] {chapters/dat_N/T10LME5bm25.dat};
        % \addlegendentry{$C_{tol}^L=10$}
             \addplot[teal,thick,mark=star, mark options={solid},dotted] 
         table[x=h, y=EHZ] {chapters/dat_N/T10RME5bm25.dat};
       %    \addlegendentry{$C_{tol}^R=10$}
          \addplot[cyan,mark=o, mark options={solid},] 
        table[x=h, y=EHZ] {chapters/dat_N/T1NME5bm25.dat};
       %  \addlegendentry{$C_{tol}^N=1$}
        \addplot[magenta,thick,mark=o, mark options={solid},dashed] 
        table[x=h, y=EHZ] {chapters/dat_N/T1LME5bm25.dat};
        % \addlegendentry{$C_{tol}^L=1$}
             \addplot[teal,thick,mark=o, mark options={solid},dotted] 
         table[x=h, y=EHZ] {chapters/dat_N/T1RME5bm25.dat};
    %\addlegendentry{$C_{tol}^R=1$}
               \end{loglogaxis}
\end{tikzpicture}
\subcaption{$E^M_{\nabla_\mathcal{D}\zeta}$}
\end{minipage}
 \hskip 30pt
\begin{minipage}{0.45\textwidth}
\begin{tikzpicture}[scale=0.75]
% \pgfplotsset{
%     every axis legend/.append style={
%         at={(0.5,1.03)},
%         anchor=south
%     },
% }
\begin{loglogaxis}[name=egraphSEZ6,%legend columns=3,legend to name=SEZ, %tick align=outside,tick pos=lower,
        %xtick=\empty,
         %extra x ticks={37, 129, 481, 741, 1857},
         %extra x tick labels={37, 129, 481, 741, 1857},
        %extra x tick style={tick label style={rotate=0}},
        xlabel=$h$,
         ylabel= \textsc{Error},x post scale =1%,y post scale =2.5
         ]         
         % \logLogSlopeTriangle{0.95}{0.4}{0.1}{1}{black};
         \addplot[cyan,mark=*, mark options={solid},] 
        table[x=h, y=EXI] {chapters/dat_N/T100NME5bm25.dat};%EXI L1 error norm of \Xi
    %\addlegendentry{$C_{tol}^N=100$}
        \addplot[magenta,thick,mark=*, mark options={solid},dashed] 
        table[x=h, y=EXI] {chapters/dat_N/T100LME5bm25.dat};
        %   \addlegendentry{$C_{tol}^L=100$}
             \addplot[teal,thick,mark=*, mark options={solid},dotted] 
         table[x=h, y=EXI] {chapters/dat_N/T100RME5bm25.dat};
        % \addlegendentry{$C_{tol}^R=100$}
         \addplot[cyan,mark=star, mark options={solid}] 
        table[x=h, y=EXI] {chapters/dat_N/T10NME5bm25.dat};
       %  \addlegendentry{$C_{tol}^N=10$}
        \addplot[magenta,thick,mark=star, mark options={solid},dashed] 
        table[x=h, y=EXI] {chapters/dat_N/T10LME5bm25.dat};
        % \addlegendentry{$C_{tol}^L=10$}
             \addplot[teal,thick,mark=star, mark options={solid},dotted] 
         table[x=h, y=EXI] {chapters/dat_N/T10RME5bm25.dat};
       %    \addlegendentry{$C_{tol}^R=10$}
          \addplot[cyan,mark=o, mark options={solid},] 
        table[x=h, y=EXI] {chapters/dat_N/T1NME5bm25.dat};
       %  \addlegendentry{$C_{tol}^N=1$}
        \addplot[magenta,thick,mark=o, mark options={solid},dashed] 
        table[x=h, y=EXI] {chapters/dat_N/T1LME5bm25.dat};
        % \addlegendentry{$C_{tol}^L=1$}
             \addplot[teal,thick,mark=o, mark options={solid},dotted] 
         table[x=h, y=EXI] {chapters/dat_N/T1RME5bm25.dat};
    %\addlegendentry{$C_{tol}^R=1$}
               \end{loglogaxis}
\end{tikzpicture}
\subcaption{$E^M_{\Pi_\mathcal{D}\Xi}$}
\end{minipage}
\caption{Stochastic case: comparison of various errors for different choices of $C_{tol}$ and $C_{\varepsilon}=1$}
\label{fig:SEgraph_Ezeta}
 \end{figure}%
based on the relative error norms $E^M_{\Pi_\mathcal{D}\zeta}$, $E^M_{\nabla_\mathcal{D}\zeta}$ and $E^M_{\Pi_\mathcal{D}\Xi}$ (for detail methodology, refer to \cite{DRONIOU2024114}, against a reference solution computed on the finest mesh (M5). The errors and norm values are calculated using 25 random Brownian motions. Across all cases, the quantities exhibit convergence, with particular improvement seen in L and R for $\nabla\zeta$, which had previously struggled in the deterministic setting. The N method remains consistent across different values of $C_{\text{tol}}$, while L and R display minor variations. In Figure \ref{fig:Egraph_Ezeta}--(a), although the errors using L and R method are slightly higher for $C_{\text{tol}}=1$, they eventually achieve a similar level of saturation. Figure \ref{fig:Egraph_Ezeta}--(b) shows higher rates of convergence for L and R  with $C_{\text{tol}}=1$ on larger mesh sizes,  with errors significantly decreasing as the mesh is refined. This initial higher error is expected due to the large tolerance chosen. However, there is no significant difference in errors for $E^M_{\Pi_\mathcal{D}\Xi}$ in Figure  \ref{fig:Egraph_Ezeta}--(c). 

The figures in \ref{fig:SVgraph} illustrates the convergence behaviour of various norm values, including the norms of \(\Pi_{\mathcal{D}} \zeta\), \(\nabla_{\mathcal{D}} \zeta\), and \(\Pi_{\mathcal{D}} \Xi\). In all cases of $C_{\text{tol}}$, we observe that the norm values converge towards a constant as the mesh size decreases, demonstrating the reliability of using various tolerance levels and their potential effects on the solution. Overall, there is not a significant difference in the errors and norms values for $C_{\text{tol}}\ge 10$. Additionally, the plots for $\nabla_{\mathcal{D}} \zeta$ shows convergence even for $C_{\text{tol}}=1$, although with a comparatively larger error. Notably, a significant reduction in computational time is observed for the L and R methods, as shown in Figure \ref{fig:scpu} for $C_{\text{tol}}\ge 10$.

These experiments provide a valuable insights for those seeking a reliable alternative to the Newton method. These linearised methods (L and R) do an excellent job, even in the deterministic case, when the tolerance is selected appropriately, offering the facility to tune accuracy and efficiency to specific requirements. In the Stefan problem, the Newton method neither struggles significantly nor diverges, which suggest that under more challenging conditions, the performance of the L and R methods could be more advantageous.%
% To be uncommented
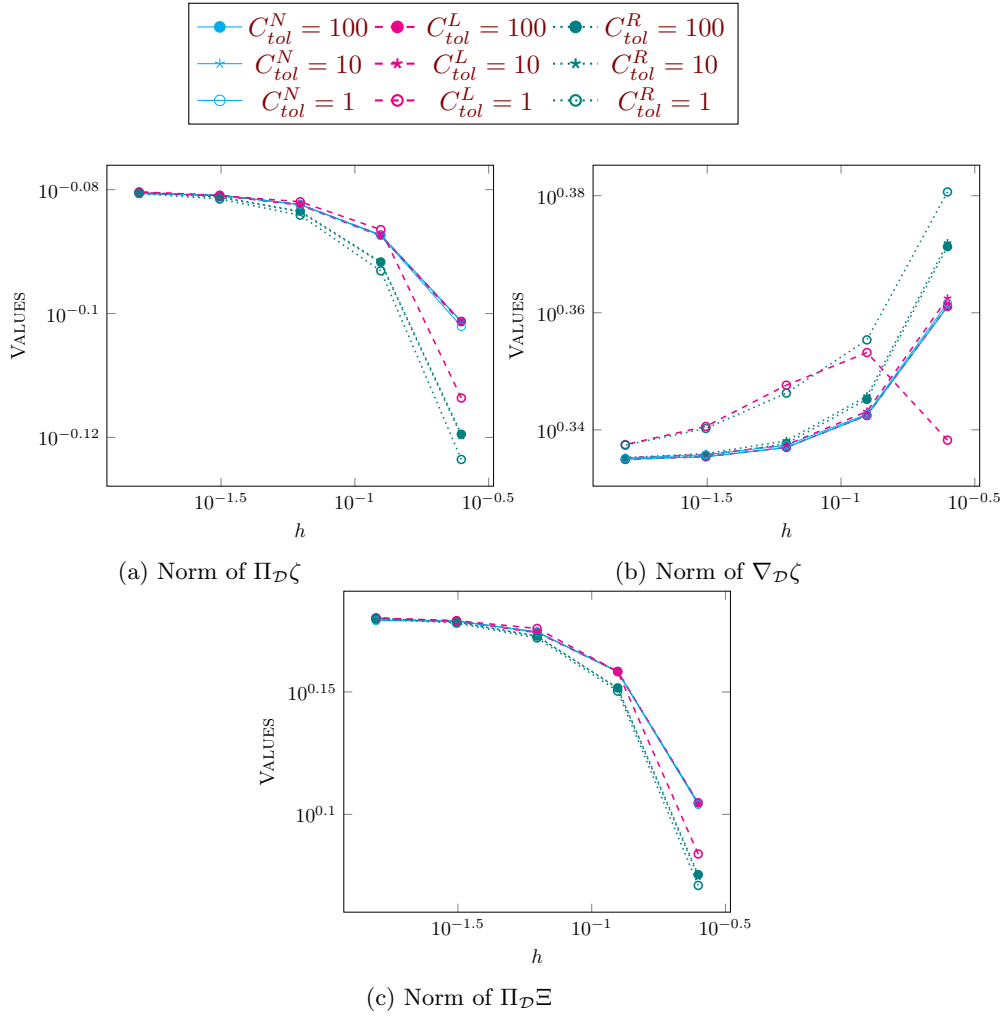
\begin{figure}\centering
\centerline{ \ref{SVZ}}
\vspace{0.50cm}
\begin{minipage}{0.45\textwidth}
\begin{tikzpicture}[scale=0.75]
% \pgfplotsset{
%     every loglogaxis legend/.append style={
%         at={(0.5,1.03)},
%         anchor=south
%     },
% }
\begin{loglogaxis}[name=egraphSVZ4,legend columns=3,legend to name=SVZ, %tick align=outside,tick pos=lower,
        %xtick=\empty,
         %extra x ticks={37, 129, 481, 741, 1857},
         %extra x tick labels={37, 129, 481, 741, 1857},
        %extra x tick style={tick label style={rotate=0}},
        xlabel=$h$,
         ylabel=\textsc{Values},x post scale =1%,y post scale =2.5
         ]         
         % \logLogSlopeTriangle{0.95}{0.4}{0.1}{1}{black};
         \addplot[cyan,mark=*, mark options={solid},] 
        table[x=h, y=VLZ] {chapters/dat_N/T100NMV5bm25.dat};%VLZ L2 error norm of \zeta
    \addlegendentry{$C_{tol}^N=100$}
        \addplot[magenta,thick,mark=*, mark options={solid},dashed] 
        table[x=h, y=VLZ] {chapters/dat_N/T100LMV5bm25.dat};
           \addlegendentry{$C_{tol}^L=100$}
             \addplot[teal,thick,mark=*, mark options={solid},dotted] 
         table[x=h, y=VLZ] {chapters/dat_N/T100RMV5bm25.dat};
         \addlegendentry{$C_{tol}^R=100$}
         \addplot[cyan,mark=star, mark options={solid}] 
        table[x=h, y=VLZ] {chapters/dat_N/T10NMV5bm25.dat};
         \addlegendentry{$C_{tol}^N=10$}
        \addplot[magenta,thick,mark=star, mark options={solid},dashed] 
        table[x=h, y=VLZ] {chapters/dat_N/T10LMV5bm25.dat};
         \addlegendentry{$C_{tol}^L=10$}
             \addplot[teal,thick,mark=star, mark options={solid},dotted] 
         table[x=h, y=VLZ] {chapters/dat_N/T10RMV5bm25.dat};
           \addlegendentry{$C_{tol}^R=10$}
          \addplot[cyan,mark=o, mark options={solid},] 
        table[x=h, y=VLZ] {chapters/dat_N/T1NMV5bm25.dat};
         \addlegendentry{$C_{tol}^N=1$}
        \addplot[magenta,thick,mark=o, mark options={solid},dashed] 
        table[x=h, y=VLZ] {chapters/dat_N/T1LMV5bm25.dat};
         \addlegendentry{$C_{tol}^L=1$}
             \addplot[teal,thick,mark=o, mark options={solid},dotted] 
         table[x=h, y=VLZ] {chapters/dat_N/T1RMV5bm25.dat};
    \addlegendentry{$C_{tol}^R=1$}
               \end{loglogaxis}
\end{tikzpicture}
\subcaption{Norm of $\Pi_{\mathcal{D}}\zeta$}
\end{minipage}
 \hskip 30pt
\begin{minipage}{0.45\textwidth}
\begin{tikzpicture}[scale=0.75]
% \pgfplotsset{
%     every loglogaxis legend/.append style={
%         at={(0.5,1.03)},
%         anchor=south
%     },
% }
\begin{loglogaxis}[name=egraphSEZ5,%legend columns=3,legend to name=SEZ, %tick align=outside,tick pos=lower,
        %xtick=\empty,
         %extra x ticks={37, 129, 481, 741, 1857},
         %extra x tick labels={37, 129, 481, 741, 1857},
        %extra x tick style={tick label style={rotate=0}},
        xlabel=$h$,
         ylabel=\textsc{Values},x post scale =1%,y post scale =2.5
         ]         
         % \logLogSlopeTriangle{0.95}{0.4}{0.1}{1}{black};
         \addplot[cyan,mark=*, mark options={solid},] 
        table[x=h, y=VHZ] {chapters/dat_N/T100NMV5bm25.dat};%VHZ L2 error norm of \nabla\zeta
    %\addlegendentry{$C_{tol}^N=100$}
        \addplot[magenta,thick,mark=*, mark options={solid},dashed] 
        table[x=h, y=VHZ] {chapters/dat_N/T100LMV5bm25.dat};
        %   \addlegendentry{$C_{tol}^L=100$}
             \addplot[teal,thick,mark=*, mark options={solid},dotted] 
         table[x=h, y=VHZ] {chapters/dat_N/T100RMV5bm25.dat};
        % \addlegendentry{$C_{tol}^R=100$}
         \addplot[cyan,mark=star, mark options={solid}] 
        table[x=h, y=VHZ] {chapters/dat_N/T10NMV5bm25.dat};
       %  \addlegendentry{$C_{tol}^N=10$}
        \addplot[magenta,thick,mark=star, mark options={solid},dashed] 
        table[x=h, y=VHZ] {chapters/dat_N/T10LMV5bm25.dat};
        % \addlegendentry{$C_{tol}^L=10$}
             \addplot[teal,thick,mark=star, mark options={solid},dotted] 
         table[x=h, y=VHZ] {chapters/dat_N/T10RMV5bm25.dat};
       %    \addlegendentry{$C_{tol}^R=10$}
          \addplot[cyan,mark=o, mark options={solid},] 
        table[x=h, y=VHZ] {chapters/dat_N/T1NMV5bm25.dat};
       %  \addlegendentry{$C_{tol}^N=1$}
        \addplot[magenta,thick,mark=o, mark options={solid},dashed] 
        table[x=h, y=VHZ] {chapters/dat_N/T1LMV5bm25.dat};
        % \addlegendentry{$C_{tol}^L=1$}
             \addplot[teal,thick,mark=o, mark options={solid},dotted] 
         table[x=h, y=VHZ] {chapters/dat_N/T1RMV5bm25.dat};
    %\addlegendentry{$C_{tol}^R=1$}
               \end{loglogaxis}
\end{tikzpicture}
\subcaption{Norm of $\nabla_{\mathcal{D}}\zeta$}
\end{minipage}
\\% \hskip 30pt
\begin{minipage}{0.45\textwidth}
\begin{tikzpicture}[scale=0.75]
% \pgfplotsset{
%     every loglogaxis legend/.append style={
%         at={(0.5,1.03)},
%         anchor=south
%     },
% }
\begin{loglogaxis}[name=egraphSEZ6,%legend columns=3,legend to name=SEZ, %tick align=outside,tick pos=lower,
        %xtick=\empty,
         %extra x ticks={37, 129, 481, 741, 1857},
         %extra x tick labels={37, 129, 481, 741, 1857},
        %extra x tick style={tick label style={rotate=0}},
        xlabel=$h$,
         ylabel=\textsc{Values},x post scale =1%,y post scale =2.5
         ]         
         % \logLogSlopeTriangle{0.95}{0.4}{0.1}{1}{black};
         \addplot[cyan,mark=*, mark options={solid},] 
        table[x=h, y=VXI] {chapters/dat_N/T100NMV5bm25.dat};%VXI L1 error norm of \Xi
    %\addlegendentry{$C_{tol}^N=100$}
        \addplot[magenta,thick,mark=*, mark options={solid},dashed] 
        table[x=h, y=VXI] {chapters/dat_N/T100LMV5bm25.dat};
        %   \addlegendentry{$C_{tol}^L=100$}
             \addplot[teal,thick,mark=*, mark options={solid},dotted] 
         table[x=h, y=VXI] {chapters/dat_N/T100RMV5bm25.dat};
        % \addlegendentry{$C_{tol}^R=100$}
         \addplot[cyan,mark=star, mark options={solid}] 
        table[x=h, y=VXI] {chapters/dat_N/T10NMV5bm25.dat};
       %  \addlegendentry{$C_{tol}^N=10$}
        \addplot[magenta,thick,mark=star, mark options={solid},dashed] 
        table[x=h, y=VXI] {chapters/dat_N/T10LMV5bm25.dat};
        % \addlegendentry{$C_{tol}^L=10$}
             \addplot[teal,thick,mark=star, mark options={solid},dotted] 
         table[x=h, y=VXI] {chapters/dat_N/T10RMV5bm25.dat};
       %    \addlegendentry{$C_{tol}^R=10$}
          \addplot[cyan,mark=o, mark options={solid},] 
        table[x=h, y=VXI] {chapters/dat_N/T1NMV5bm25.dat};
       %  \addlegendentry{$C_{tol}^N=1$}
        \addplot[magenta,thick,mark=o, mark options={solid},dashed] 
        table[x=h, y=VXI] {chapters/dat_N/T1LMV5bm25.dat};
        % \addlegendentry{$C_{tol}^L=1$}
             \addplot[teal,thick,mark=o, mark options={solid},dotted] 
         table[x=h, y=VXI] {chapters/dat_N/T1RMV5bm25.dat};
    %\addlegendentry{$C_{tol}^R=1$}
               \end{loglogaxis}
\end{tikzpicture}
\subcaption{Norm of $\Pi_{\mathcal{D}}\Xi$}
\end{minipage}
\caption{Stochastic case: comparison of various norm values for different choices of $C_{tol}$ and $C_{\varepsilon}=1$}
\label{fig:SVgraph}
 \end{figure}%
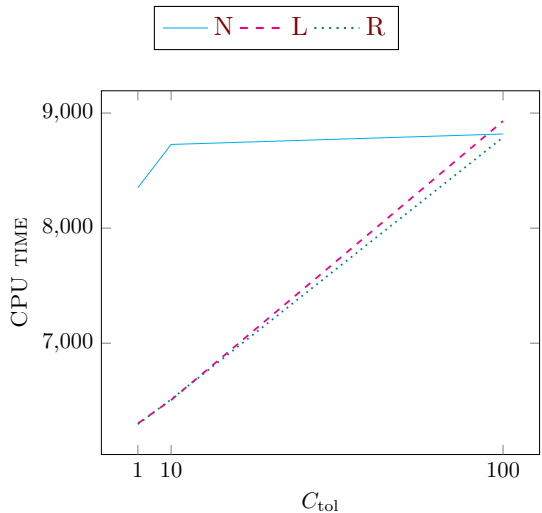
\begin{figure}\centering
%
%\begin{minipage}{0.25\textwidth}
\centerline{ \ref{scpu}}
  %\end{minipage}
%\begin{minipage}{0.3\textwidth}
\vspace{0.50cm}
\begin{tikzpicture}[scale=0.85]
% \pgfplotsset{
%     every axis legend/.append style={
%         at={(0.5,1.03)},
%         anchor=south
%     },
% }
        \begin{axis}[name=SCPU,legend columns=3,legend to name=scpu, %tick align=outside,tick pos=lower,
        xtick=\empty,
         extra x ticks={1,10,100},
         extra x tick labels={1,10,100},
        %extra x tick style={tick label style={rotate=0}},
        xlabel=$C_{\text{tol}}$,
         ylabel=$\textsc{CPU time}$ %,y post scale =2.5
         %,skip coords between index={21}{24}
         ]         
         % \logLogSlopeTriangle{0.95}{0.4}{0.1}{1}{black};
        \addplot[cyan] 
        table[x=ctol, y=N] {chapters/dat_N/scpu.dat};
        \addlegendentry{N}
        \addplot[magenta,thick,dashed] 
        table[x=ctol, y=L] {chapters/dat_N/scpu.dat};
         \addlegendentry{L}
         \addplot[teal,thick,dotted] 
         table[x=ctol, y=R] {chapters/dat_N/scpu.dat};
        \addlegendentry{R}
          \end{axis}
    \end{tikzpicture}
    \caption{Stochastic case: CPU times vs.\ $C_{\text{tol}}=\{1,10,100\}$ with $C_{\varepsilon}=1$}
    \label{fig:scpu}
 \end{figure}%

Moreover, the L and R methods excel in the stochastic case, particularly when simulating a problem for a large number of Brownian motions. These methods significantly reduce CPU time as the number of Brownian motions increase, while still maintaining the desired level of accuracy. These tests highlight the need to decrease the tolerance while estimating the gradient of a quantity. 

\section{Conclusion}
We presented a numerical %\st
{scheme} %\SP{method} 
for 
%\st{addressing} 
the %\st{stochastic} 
Stefan problem%\SP
{, in both deterministic and stochastic cases. The scheme combines the Euler implicit discretisation in time with a gradient method in space. For solving the resulting nonlinear, fully discrete systems, several strategies are studied, including a detailed comparison of the efficiency concerning the error reduction and the computational time.} %\st{analysing and comparing several efficient strategies for solving the associated nonlinear system. These strategies} 
%\SP{These schemes} 
These strategies include the standard Newton method, as well as % \st{linearised and regularised solvers} \SP
{fixed-point iterative schemes, including or not a regularisation step}. Through numerical sensitivity analysis, we demonstrated how to optimise the choice of the parameters to balance the errors due to the discretisation, the linear iterations and, if applicable, the regularisation. This allows to increase the efficiency of the computations while maintaining the accuracy. The numerical tests provide a guidance for selecting the most appropriate method based on specific circumstances, with a particular emphasis on the advantages of the linearised methods in stochastic cases. Specifically, the L and R- methods proved to be highly effective, especially when simulating problems involving a large number of Brownian motions, significantly reducing the CPU time while preserving the desired accuracy. The findings suggest that decreasing the tolerance in the gradient estimation can further enhance the performance of these methods.

\section*{Acknowledgements}
M.A. Khan acknowledges the support of Ghazi university and higher education commission (HEC), Pakistan. The work of I.S. Pop was supported by the Research Foundation - Flanders (FWO), project G0A9A25N and the German Research Foundation (DFG) through the SFB 1313, project number 327154368.

\printbibliography
%\appendix
\section{Appendix}
We used the following inequalities.
\begin{lemma}\label{lem:zetaLab}
    Let $\zeta$ be a non-decreasing Lipschitz continuous function with Lipschitz constant $L_\zeta\ge0$, such that $0\le\zeta'(s)\le L_\zeta$ for all $s\in\RR$, and let $L\ge L_\zeta/2$. Then for any $a,b\in \RR$, the following inequality holds:
    \bea[eq:zetaLab]
    \left| L(a-b)-\left(\zeta(a)-\zeta(b)\right)\right|\le L \left|a-b\right|.
    \eea
\end{lemma}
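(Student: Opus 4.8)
The plan is to reduce to the case $a \ge b$ by symmetry, then exploit that monotonicity together with the Lipschitz bound pins down the increment $\zeta(a)-\zeta(b)$ between $0$ and $L_\zeta(a-b)$, which in combination with the hypothesis $L \ge L_\zeta/2$ forces the claimed two-sided estimate.

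First I would observe that both sides of \eqref{eq:zetaLab} are invariant under swapping $a$ and $b$: the right-hand side $L|a-b|$ is manifestly symmetric, and the quantity inside the absolute value on the left merely changes sign. Hence it suffices to treat the case $a \ge b$, and I set $t := a-b \ge 0$ and $s := \zeta(a)-\zeta(b)$.

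Next I would record the two elementary bounds on $s$. Monotonicity of $\zeta$ gives $s \ge 0$, while the Lipschitz property (equivalently $0 \le \zeta' \le L_\zeta$) gives $s \le L_\zeta t$. The target inequality $|Lt - s| \le Lt$ is equivalent to the pair $-Lt \le Lt - s \le Lt$. The upper estimate $Lt - s \le Lt$ is just the statement $s \ge 0$; the lower estimate $Lt - s \ge -Lt$ reads $s \le 2Lt$, which follows from $s \le L_\zeta t$ together with $2L \ge L_\zeta$. This closes the argument.

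There is essentially no obstacle here: the only points to get right are the orientation of the two inequalities and the role of the hypothesis $L \ge L_\zeta/2$, which is precisely what makes $2L \ge L_\zeta$ available for the lower bound. As an alternative route, one could write $Lt - s = \int_b^a \big(L - \zeta'(\sigma)\big)\,d\sigma$ using the absolute continuity of the Lipschitz function $\zeta$, and note that $0 \le \zeta' \le L_\zeta \le 2L$ yields $|L - \zeta'(\sigma)| \le L$ for a.e.\ $\sigma$; integrating over an interval of length $t$ then reproduces the same bound without any case split. I would nonetheless present the elementary version as the main proof, since it avoids invoking differentiability and keeps the argument self-contained.
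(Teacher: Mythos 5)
Your proof is correct, but it takes a genuinely different route from the paper's. The paper applies the mean value theorem to write $\zeta(a)-\zeta(b)=\zeta'(c)(a-b)$ for some intermediate point $c$, and then shows $\left|L-\zeta'(c)\right|\le L$ from $0\le\zeta'(c)\le L_\zeta$ together with $L\ge L_\zeta/2$, which gives $-L\le L-L_\zeta\le L-\zeta'(c)\le L$. You instead reduce by symmetry to the case $a\ge b$ and observe that the claimed inequality $|Lt-s|\le Lt$ (with $t=a-b$, $s=\zeta(a)-\zeta(b)$) is exactly equivalent to the two-sided bound $0\le s\le 2Lt$, which follows directly from monotonicity ($s\ge 0$) and the Lipschitz property ($s\le L_\zeta t\le 2Lt$, using $2L\ge L_\zeta$). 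The two arguments are of comparable length, but yours is slightly more robust: a Lipschitz function is differentiable only almost everywhere, so the paper's pointwise evaluation of $\zeta'(c)$ leans on the somewhat artificial hypothesis that $\zeta'(s)$ exists for all $s\in\RR$, whereas your main argument needs nothing beyond monotonicity and the Lipschitz constant. Your alternative route via $L(a-b)-\left(\zeta(a)-\zeta(b)\right)=\int_b^a\left(L-\zeta'(\sigma)\right)d\sigma$ with the a.e.\ bound $\left|L-\zeta'(\sigma)\right|\le L$ is essentially the measure-theoretically careful version of the paper's mean-value argument, so you have in effect covered both proofs while presenting the more elementary one as primary.
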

\begin{proof}
    By the mean value theorem, there exists some $c\in (a,b)$ such that
    $$
    \zeta(a)-\zeta(b)=\zeta'(c)(a-b).
    $$
    Substituting this into the left-hand side of \eqref{eq:zetaLab}, we have
    $$
    \left| L(a-b)-\left(\zeta(a)-\zeta(b)\right)\right|=\left|L-\zeta'(c)\right|\left|a-b\right|.
    $$
    Since $0\le\zeta'(c)\le L_\zeta$, it follows that $L-L_\zeta\le L-\zeta'(c)\le L$. Taking absolute values, we have
    $$
    \left|L-\zeta'(c)\right|\le \operatorname{max}\left(L, |L-L_\zeta|\right).
    $$
    Now, given $L\ge L_\zeta/2$, it follows that $2L-L_\zeta\ge0$ this ensures
    $L-L_\zeta\ge -L$. Moreover, since $L_\zeta\ge 0$, we also have $L-L_\zeta\le L$. Combining last two inequalities, we conclude that
    $$
    \left|L-\zeta'(c)\right|\le L.
    $$
    Thus the required inequality \eqref{eq:zetaLab} follows. 
\end{proof}
\begin{lemma}\label{lem:ZLab}
    Let $Z$ be non-decreasing Lipschitz continuous function with $0\le L_Z\le Z' (u)$. If $L\ge Z'(u)$ then, for $a,b\in\RR$, the following inequality holds:
    \bea[eq:ZLab]
    |L(a-b)-(Z(a)-Z(b))|\le (L-L_Z)|a-b|.
    \eea
\end{lemma}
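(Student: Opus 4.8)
The plan is to reproduce the argument used for Lemma~\ref{lem:zetaLab} almost verbatim; in fact the present inequality is the easier of the two, because here $L$ bounds $Z'$ from above everywhere, so the nonnegativity of $L-Z'$ is immediate and no case analysis is required.

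First I would dispatch the trivial case $a=b$, for which both sides of \eqref{eq:ZLab} vanish. For $a\neq b$, I would invoke the mean value theorem on the interval with endpoints $a$ and $b$: there exists a point $c$ strictly between $a$ and $b$ with $Z(a)-Z(b)=Z'(c)(a-b)$. Substituting this into the left-hand side of \eqref{eq:ZLab} and factoring out $(a-b)$ gives
\[
\left|L(a-b)-\big(Z(a)-Z(b)\big)\right|=\left|L-Z'(c)\right|\,|a-b|.
\]

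The key step is then to control $|L-Z'(c)|$. By hypothesis $L_Z\le Z'(c)\le L$, so $0\le L-Z'(c)\le L-L_Z$; in particular $L-Z'(c)$ is nonnegative, whence $|L-Z'(c)|=L-Z'(c)\le L-L_Z$. Combining this with the displayed identity yields \eqref{eq:ZLab}.

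I do not anticipate a genuine obstacle: the only point requiring mild care is the sign of $L-Z'(c)$, which is settled at once by the pointwise upper bound $Z'\le L$, together with the degenerate case $a=b$. This is in contrast with Lemma~\ref{lem:zetaLab}, where the weaker hypothesis $L\ge L_\zeta/2$ forced the intermediate estimate $|L-\zeta'(c)|\le\max\left(L,|L-L_\zeta|\right)$ before one could conclude; here the stronger assumption $L\ge Z'$ removes that step entirely.
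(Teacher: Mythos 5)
Your proposal is correct and follows essentially the same route as the paper's own proof: the mean value theorem reduces the claim to bounding $L-Z'(c)$, which the hypotheses $L_Z\le Z'\le L$ settle immediately. The only difference is cosmetic care — you dispatch $a=b$ explicitly and justify dropping the absolute value, a step the paper performs implicitly by writing $\left(L-Z'(c)\right)\left|a-b\right|$ without modulus.
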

\begin{proof}
By the mean value theorem, there exists some $c\in (a,b)$ such that
    $$
    Z(a)-Z(b)=Z'(c)(a-b).
    $$
    Substituting this into the left-hand side of \eqref{eq:ZLab}, we have
    $$
    \left| L(a-b)-\left(Z(a)-Z(b)\right)\right|=\left(L-Z'(c)\right)\left|a-b\right|.
    $$
    Since $L_Z\le Z'(u)$, The conclusion follows from $L-Z'(u)\le L-L_Z.$
    % Taking absolute values, we get
    % $$
    % |L-Z(u)|\le \operatorname{max}(|L-\varepsilon^{-1}|, |L-L_Z|).
    % $$
    % Given $L\ge\varepsilon^{-1}$ and $L_Z\le \varepsilon^{-1}$, we have
    % $$
    % 0\le L-\varepsilon^{-1}\le L-L_Z.
    % $$
    % Thus, the required inequality holds.
\end{proof}
\end{document}